\documentclass[11pt,leqno]{amsart}
\usepackage{amsfonts,amssymb,epsfig,latexsym}
\usepackage{amsmath,comment,bm,mathrsfs,braket}
\usepackage[latin1]{inputenc}
\usepackage{color,layout}
\usepackage{ifthen}
\usepackage{a4wide}
\usepackage{hyperref}
\hypersetup{%
  colorlinks = true,
  linkcolor  = blue,
  citecolor    = red
}

\usepackage{xparse}
\makeatletter
\RenewDocumentCommand{\title}{om}{%
   \IfNoValueTF{#1}
     {\gdef\shorttitle{Resonance expansion}}%
     {\gdef\shorttitle{#1}}%
   \gdef\@title{#2}%
}
\makeatother
\usepackage{graphicx}[dvipdfmx,hiresbb]
\usepackage{color}  
\usepackage{bmpsize}

\newtheorem{theorem}{Theorem}[section]
\newtheorem{lemma}[theorem]{Lemma}
\newtheorem{proposition}[theorem]{Proposition}
\newtheorem{definition}[theorem]{Definition}
\newtheorem{remark}[theorem]{Remark}
\newtheorem{corollary}[theorem]{Corollary}

\def\square{\hbox{\vrule\vbox{\hrule\phantom{o}\hrule}\vrule}}


\newcommand{\be}{\begin{equation}}
\newcommand{\ee}{\end{equation}}
\newcommand{\ben}{\begin{equation*}}
\newcommand{\een}{\end{equation*}}

\newcommand{\til}[1]{\widetilde{#1}}

\numberwithin{equation}{section}

\parskip 6pt


\newcommand{\N}{\mathbb{N}}
\newcommand{\Z}{\mathbb{Z}}
\newcommand{\R}{\mathbb{R}}
\newcommand{\C}{\mathbb{C}}
\newcommand{\T}{\mathbb{T}}

\newcommand{\W}{{\mathcal W}}

\newcommand{\cA}{{\mathcal A}}

\newcommand{\cK}{{\mathcal K}}

\newcommand{\cH}{{\mathcal H}}
\newcommand{\cU}{{\mathcal U}}

\newcommand{\e}{\varepsilon}
\newcommand{\dl}{\delta}
\newcommand{\pphi}{\varphi}

\newcommand{\re}{{\rm Re}\hskip 1pt }
\newcommand{\im}{{\rm Im}\hskip 1pt }
\newcommand{\ord}{{\mathcal O}}

\newcommand{\ope}[1]{{\operatorname{#1}}}
\newcommand{\mc}[1]{{\mathcal{#1}}}
\newcommand{\bu}[1]{{\textbf{\textup{#1}}}}

\newcommand{\refup}[1]{$\ref{#1}$}

\numberwithin{equation}{section}

\begin{document}

\title{Resonances in finitely perturbed quantum walks, resonance  expansion and generic simplicity}
\author{Kenta Higuchi}
\author{Hisashi Morioka}
\author{Etsuo Segawa}


\maketitle

\begin{abstract}
We define resonances for finitely perturbed quantum walks as poles of the scattering matrix in the lower half plane. 
We show a resonance expansion which describes the time evolution in terms of resonances and corresponding Jordan chains. 
In particular, the decay rate of the survival probability is given by the imaginary part of resonances and the multiplicity. 
We prove generic simplicity of the resonances, although there are quantum walks with multiple resonances.
\end{abstract}
{\it Keywords:} resonance; resonance expansion; multiplicity of resonances; scattering matrix; quantum walk.
\vskip 0.5cm
{\it 2020 Mathematics Subject Classification:} 47A40; 47A70; 47D06; 47N50; 60F05; 81U24.

\section{Introduction}
Resonances have been studied in many situations including the wave equations and the Schr\"odinger equations, where they describe long time behavior of solutions (see e.g. \cite{DyZw,Ya}). 
Especially the resonance expansion approximates a solution by a finite sum of ``stationary" states corresponding to the resonances while the spectral decomposition needs to integrate the stationary states on the continuous spectrum.

Recently, the scattering problem is also one of the interests in discrete-time quantum walks on $\Z$. 
Quantum walks are time evolutions of a state, a $\C^2$-valued sequence on $\Z$, given by a unitary operator on $l^2(\Z;\C^2)$. 
Thus, the time evolution operator has its spectrum on the unit circle $e^{-i\xi}$ $(\xi\in\R)$ in the complex plane. 
On the continuous spectrum, there are generalized eigenfunctions (see \cite{KKMS,Mo}) in $l^\infty(\Z;\C^2)$. 
A generalized eigenfunction consists of an incident plane wave, a reflected wave and a transmitted wave. 
The scattering matrix is given by the reflection coefficients and the transmission coefficients. 
For example, the properties of the scattering matrix are studied by \cite{FeHi,KKMS,RST,Su,Ti}. 
Moreover, a correspondence to quantum graphs or Schr\"odinger equations \cite{HKSS,Hi,HiSe,Ta}, the resonant-tunneling effect and the local-energy \cite{MMOS,HKKMS} are also studied. 
These problems are considered on the continuous spectrum (unit circle) or its outside where the $l^2$-resolvent operator and scattering matrix are defined analytically. 
In this article, we continue meromorphically the scattering matrix with respect to the spectral parameter to the \textit{inside} of the unit circle. 
Then resonances can be defined as poles of the scattering matrix in the study on quantum walks. 
In addition, we derive resonance expansions in the view of the time evolution of quantum walks, 
and focus on the rate of decay induced by the degeneration of resonances.

It is natural to start with compactly supported potentials in scattering problems for the wave equations and the Schr\"odinger equations \cite{DyZw}. 
We consider the ``finitely perturbed" discrete-time quantum walk on $\Z$ to observe the resonance properties of quantum walks as a first step. Indeed, the finiteness of the perturbation tells us quite detail information on properties of resonances (Proposition~\ref{prop:BoundAlMul}, Proposition~\ref{prop:SymRes}). 
This type of quantum walks appear in the correspondence given in \cite{FeHi,MMOS,Hi} to the Schr\"odinger equation on the line, and also considered in \cite{KKMS,HKKMS}. 
The arguments in this article are expected to be generalized to some larger classes of the perturbations considered in \cite{Su,RST}.

In Section~\refup{Sec:DTQWZ}, we set the quantum walk treated here, and define resonances for that. 
We show the existence of the corresponding resonant state, out-going ``stationary" state (Proposition~\ref{prop:EquivRes}), the fact that the geometric multiplicity of a resonance is always one (Proposition~\ref{prop:Gmul}) but the algebraic multiplicity can be larger (Proposition~\ref{prop:Gmul}), and a symmetry of resonances and a relation between the number of resonances and the support of the perturbation (Proposition~\ref{prop:BoundAlMul}). 

In Section~\ref{Sec:ResExp}, we give the resonance expansion and the ``survival probability" of quantum walker. 
More precisely, we describe the time evolution by the finite superposition of (generalized) resonant states, and show that the decay rate of the survival probability is given by the imaginary part of a resonance. 
Here, the generalized resonant states should be considered only in the case that the algebraic multiplicity is larger than the geometric multiplicity $(=1)$. They are the corresponding object to the generalized eigenvectors for a non-diagonalizable matrix. 
The resonance expansion is obtained by the singular value decomposition of the finite-dimensional matrix which is the restriction of the time evolution to the perturbed part. In fact, we show that the time evolution of the perturbed part is given by this matrix. 
In particular, the singular value decomposition is replaced by the eigenvalue decomposition when every resonance is simple (the algebraic multiplicity is also one).
Then the resonance expansion has a much simpler form, and the decay rate become faster. 

In Section~\ref{Sec:GenMul}, we show the fact that the resonances are ``generically" simple (multiple resonances are special). 
To discuss such a topological property, we introduce a topology to the space of finitely perturbed quantum walks. 
We write explicitly the quantization condition for the resonance, and prove the generic simplicity by applying Rouch\'e's theorem. 
However, there certainly exist multiple resonances. We give a simple example by a triple-barrier setting.

\section{Definition and basic properties of resonances}\label{Sec:DTQWZ}
Our objective of this section is to define resonances for a quantum walk, and to observe their basic properties. 
\subsection{Setting}
We first recall the standard definition of two-state quantum walk model on $\Z$ (see e.g. \cite{MMOS}). 
The total Hilbert space is denoted by $\cH:=l^2(\Z;\C^2)\cong l^2(A;\C)$. Here $A$ is the set of arcs of one-dimensional lattice whose elements are labeled by $\{(n;R),\,(n;L);\,n\in\Z\}$, where $(n;R)$ and $(n;L)$ represents the arcs ``from $n-1$ to $n$ (right)" and ``from $n+1$ to $n$ (left)", respectively. 
The time evolution is given by a unitary operator $\cU$ on $\cH$ which is determined by a sequence $(U_n)_{n\in\Z}$ of $2\times 2$ unitary matrices, so called local quantum coins. It is defined by the equality
$$
(\cU\psi)(n)=P_{n+1}\psi(n+1)+Q_{n-1}\psi(n-1)\quad\psi\in\cH,
$$
where we put the matrix valued weights associated with the motion from $n$ to left and to right by
$$
P_n:=\ket{L}\bra{L}U_n,\quad
Q_n:=\ket{R}\bra{R}U_n,
$$
with $\ket{L}:=(1,0)^T,$ $\ket{R}:=(0,1)^T$, $\bra{L}:=(1,0)$, $\bra{R}:=(0,1)$. 
We denote the entries of these matrices by  
\ben
U_n=\begin{pmatrix}a_n&b_n\\c_n&d_n\end{pmatrix},\quad
P_n=\begin{pmatrix}a_n&b_n\\0&0\end{pmatrix},\quad
Q_n=\begin{pmatrix}0&0\\c_n&d_n\end{pmatrix}.
\een
We identify $\cH=l^2(\Z;\C^2)$ and $l^2(A)$ by the following correspondence: 
$$
\left\{
\begin{aligned}
&(n;R)\mapsto\bra{R}\psi(n),\\
&(n;L)\mapsto\bra{L}\psi(n),
\end{aligned}\right.
\quad \text{for}\quad \psi\in l^2(\Z;\C^2).
$$
The time evolution of the free quantum walk is given by the operator $\cU_0$ defined by
$$
(\cU_0\psi)(n)=\ket{L}\bra{L}\psi(n+1)+\ket{R}\bra{R}\psi(n-1)\quad\psi\in\cH.
$$
In other words, every quantum coin of the free quantum walk is the identity matrix $I_2$. 
In this manuscript, we consider finitely perturbed quantum walks: 

\noindent
\textbf{(A1)}
There exists $n_0\in\N$ such that $U_n=I_2$ for any $n\in\Z\setminus\{0,1,\ldots,n_0\}$. 

We put $[n_0]:=\{0,1,\ldots,n_0\}$ and introduce an operator $\cK$ which is a restriction of $\cU$ to $l^2([n_0];\C^2)=\C^{2(n_0+1)}$ determined by 
\be\label{eq:defK}
\begin{aligned}
&(\cK\psi)(n):=(\cU\psi)(n),\quad n=1,2,\ldots,n_0-1,\\
&(\cK\psi)(0):=P_1\psi(1),\quad
(\cK\psi)(n_0):=Q_{n_0-1}\psi(n_0-1).
\end{aligned}
\ee
Note that for any $\bm{v}=(\bm{v}(n))_{n=0}^{n_0}\in\C^{2(n_0+1)}$, we have
\be\label{eq:NormDecreasing}
\|\cK\bm{v}\|_{\C^{2(n_0+1)}}^2+\|P_0\bm{v}(0)\|_{\C^2}^2+\|Q_{n_0}\bm{v}(n_0)\|_{\C^2}^2=\|\bm{v}\|_{\C^{2(n_0+1)}}^2,
\ee
since $U_n$ are unitary, and consequently the operator norm $\|\cK\|$ is bounded by $1$. 

\begin{remark}\label{lem:EquivCond}
Under \bu{(A1)}, the following are equivalent:
\begin{enumerate}
\item 
$\#\{n\in\Z;\,a_n
=0\}\ge2$.
\item There exists an eigenvalue of the operator $\cU$ as an operator on $\cH$.
\item There exists an eigenvalue of $\cK$ with its absolute value $1$. 
\end{enumerate}
\end{remark}

\begin{proof}
We first suppose (3). Let $\bm{v}\in\C^{2(n_0+1)}$ be an eigenvector of $\cK$ associated to an eigenvalue $\lambda$ with $\left|\lambda\right|=1$. 
By definition \eqref{eq:defK} of $\cK$, we have $\bra{R}(\cK\bm{v})(0)=\bra{L}(\cK\bm{v})(n_0)=0$. Then $\cK\bm{v}=\lambda\bm{v}$ implies that 
\be\label{eq:noincoming}
\bra{R}\bm{v}(0)=\bra{L}\bm{v}(n_0)=0.
\ee 
The equality \eqref{eq:NormDecreasing} implies that 
\be\label{eq:nooutgoing}
P_0\bm{v}(0)=Q_{n_0}\bm{v}(n_0)=0.
\ee 
The equalities \eqref{eq:noincoming} and \eqref{eq:nooutgoing} imply that by putting 
\ben
\psi(n)=\bm{v}(n)\quad\text{for}\quad n\in[n_0],\quad
\psi(n)=0\quad\text{for}\quad n\in\Z\setminus[n_0],
\een
$\psi\in\cH$ is an eigenvector associated with the same eigenvalue as $\bm{v}$, that is, (2) holds.
Moreover, when $a_0\neq0$ (resp. $d_{n_0}\neq0$),
$$
P_0\bm{v}(0)
=a_0\bra{L}\bm{v}(0)\ket{L}=0,\quad
(\text{resp. }Q_{n_0}\bm{v}(n_0)=d_{n_0}\bra{R}\bm{v}(n_0)\ket{R}=0)
$$
shows $\bm{v}(0)=0$ (resp. $\bm{v}(n_0)=0$). 
This shows inductively that $\bm{v}=0$ if (1) does not hold. We conclude here that (3) implies (1) and (2). 

We next suppose (1). There exist $0\le n_1<n_2\le n_0$ such that $a_{n_1}=a_{n_2}=0$ and $a_n\neq0$ for any $n_1<n<n_2$. 
Then the restriction of $\cK$ to $l^2(\cA_{n_1,n_2})\cong\C^{2(n_2-n_1)}$ $(\cA_{n_1,n_2}:=\{(n;R),(k;L);\,n_1<n\le n_2,\,n_1\le k<n_2\})$ 
is a unitary matrix, hence it has $2(n_2-n_1)$ eigenvalues with its absolute value $1$. The corresponding eigenvector can be extended as an eigenvector of $\cK$ corresponding to the same eigenvalue. This implies (3). 

We finally suppose (2). Then there exists an eigenvalue $\lambda$ with $\left|\lambda\right|=1$ and corresponding eigenvector $\psi\in\cH$ since $\cU$ is unitary on $\cH$.
This implies that $\psi$ is supported only on $[n_0]$. In fact, if there exists $n_1\in\Z\setminus[n_0]$ such that $\psi(n_1)\neq0$, say $\psi(-1)=\ket{R}$, then we have $\psi(-n)=\lambda^{n-1}\ket{R}$ and consequently $\psi\notin\cH$. 
The restriction of $\psi$ to $[n_0]$ is an eigenvector of $\cK$ with the same eigenvalue.
\end{proof}

To introduce the scattering matrix, we also assume the following \textbf{(A2)}:

\noindent
\textbf{(A2)}
$a_n\neq0$ holds for any $n\in\Z$.

The operator $\cU$ can be extended as a linear operator on a weighted space $\cH_w:=l^2(\Z;\C^2;(\left|n\right|!)^{-1})$ associated with the inner product 
$$
(\psi^1,\psi^2)_{\cH_w}:=\sum_{n\in\Z}
\frac{1}{\left|n\right|!}(\psi^1(n),\psi^2(n))_{\C^2},\quad\psi^j\in\cH_w.
$$
Note that the operator $\cU$ is no longer unitary on $\cH_w$. 

We first show the unique continuation principle, the existence and uniqueness of generalized stationary state, which is a generalization of the one in \cite{Mo} for real $\xi$ to complex one.

\begin{lemma}\label{lem:UCP}
For any $\xi\in\Xi:=[-\pi,\pi)+i\R$, $\bm{v}\in\C^2$ and $n_1\in\Z$, there uniquely exists $\psi\in\cH_w$ to the equation 
\be\label{eq:EEq}
\cU\psi=e^{-i\xi}\psi
\ee
satisfying $\psi(n_1)=\bm{v}$. 
In particular, the space of the solutions to the equation \eqref{eq:EEq} is two-dimensional.
\end{lemma}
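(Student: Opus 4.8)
The plan is to convert the eigenvalue equation $\cU\psi = e^{-i\xi}\psi$ into a two-term recursion that propagates $\psi(n)$ in both directions from the prescribed value $\psi(n_1) = \bm{v}$, and to check that the resulting sequence lies in $\cH_w$. Writing out $(\cU\psi)(n) = P_{n+1}\psi(n+1) + Q_{n-1}\psi(n-1) = e^{-i\xi}\psi(n)$ componentwise, the first component reads $a_{n+1}\,\bra{L}\psi(n+1) + b_{n+1}\,\bra{R}\psi(n+1) = e^{-i\xi}\bra{L}\psi(n)$ and the second reads $c_{n-1}\,\bra{L}\psi(n-1) + d_{n-1}\,\bra{R}\psi(n-1) = e^{-i\xi}\bra{R}\psi(n)$. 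Under assumption \textbf{(A2)} we have $a_n \neq 0$ for all $n$, so the first equation can be solved for $\bra{L}\psi(n+1)$ in terms of $\psi(n)$ and $\bra{R}\psi(n+1)$; combined with the second equation (which determines $\bra{R}\psi(n+1)$ from $\psi(n)$, after shifting the index), this expresses $\psi(n+1)$ as an explicit linear function of $\psi(n)$ alone. Concretely one obtains a transfer matrix $T_n(\xi) \in GL_2(\C)$, depending holomorphically on $\xi \in \Xi$, with $\psi(n+1) = T_n(\xi)\psi(n)$; since $a_n \neq 0$ this $T_n(\xi)$ is invertible, so the recursion also runs backwards, $\psi(n-1) = T_{n-1}(\xi)^{-1}\psi(n)$.

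\textbf{Existence, uniqueness, and dimension count.} Given $\bm{v} \in \C^2$ and $n_1 \in \Z$, define $\psi(n)$ for all $n$ by iterating $T_n(\xi)$ forwards and $T_{n-1}(\xi)^{-1}$ backwards starting from $\psi(n_1) = \bm{v}$; by construction this is the unique sequence satisfying \eqref{eq:EEq} with the prescribed value. It remains to verify $\psi \in \cH_w$. For $|n|$ large, assumption \textbf{(A1)} forces $U_n = I_2$, so $T_n(\xi)$ reduces to the free transfer matrix, which is $\mathrm{diag}(e^{-i\xi}, e^{i\xi})$ or its inverse depending on direction — in any case $\|T_n(\xi)\| \le e^{|\im \xi|}$ there. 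Hence $\|\psi(n)\|_{\C^2}$ grows at most like $C(\xi)^{|n|}$ for some constant, and since the weight in $\cH_w$ is $(|n|!)^{-1}$, the series $\sum_n \|\psi(n)\|^2/|n|!$ converges (exponential growth is dominated by the factorial decay). So $\psi \in \cH_w$, establishing existence and uniqueness. The map $\bm{v} \mapsto \psi$ is a linear bijection from $\C^2$ onto the solution space, which is therefore two-dimensional.

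\textbf{Main obstacle.} The conceptual content is entirely in the transfer-matrix reduction: the genuine point to get right is that \textbf{(A2)} is exactly what makes the step $\psi(n) \mapsto \psi(n+1)$ both well-defined and invertible, so that no information is lost in either direction and the solution is determined globally by its value at a single site. The other place requiring a little care is the growth estimate showing $\psi \in \cH_w$ for complex $\xi$: away from the perturbed region the transfer matrices are the free ones with norm controlled by $e^{|\im\xi|}$, so $\|\psi(n)\|$ is at most exponential in $|n|$, and the factorial weight $(|n|!)^{-1}$ then guarantees $\ell^2$-summability for every fixed $\xi \in \Xi$ — this is precisely why the unusual weighted space is introduced. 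Everything else is routine bookkeeping: assembling $T_n(\xi)$ from the entries $a_n, b_n, c_n, d_n$ and checking holomorphy in $\xi$.
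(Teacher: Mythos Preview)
Your proof is correct and takes essentially the same transfer-matrix approach as the paper. The paper works in the staggered coordinates $\Pi_n\psi := \bra{L}\psi(n)\ket{L}+\bra{R}\psi(n+1)\ket{R}$, so that the local transfer matrix $T_n$ depends only on the single coin $U_n$ (convenient for the Wronskian computations that follow), and it leaves the growth estimate for membership in $\cH_w$ implicit; otherwise the argument is the same.
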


\begin{figure}
\centering
\includegraphics[bb=0 0 277 160, width=6cm]{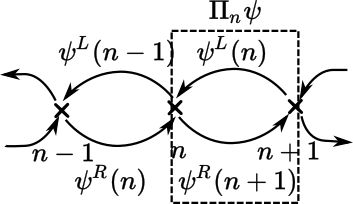}
\caption{Definition of $\Pi_n\psi$}
\label{Fig1}
\end{figure}
\begin{proof}
We introduce $\Pi_{n}:\cH_w\to \C^2$ by
\ben
\Pi_{n}\psi=\bra{L}\psi(n)\ket{L}+\bra{R}\psi(n+1)\ket{R},
\een 
and the local transfer matrix $T_n=T_n(\xi)$ by
\ben
T_n(\xi)=
\begin{pmatrix}
e^{i\xi}/\bar{a}_n&-\bar{c}_n/\bar{a}_n\\
-c_n/d_n&e^{-i\xi}/d_n
\end{pmatrix},\quad
T_n(\xi)^{-1}=
\begin{pmatrix}
e^{-i\xi}/a_n&-b_n/a_n\\
-\bar{b}_n/\bar{d}_n&e^{i\xi}/\bar{d}_n
\end{pmatrix}.
\een
Then the equation \eqref{eq:EEq} is equivalent to the condition
\ben
T_n\Pi_{n}\psi=\Pi_{n-1}\psi \quad\text{for each}\quad n\in\Z.
\een 
We construct a solution $\psi$ satisfying $\psi(n_1)=\bm{v}$ by
\begin{align*}
&\Pi_{n_1}\psi=\bra{L}\bm{v}\ket{L}+e^{i\xi}Q_{n_1}\bm{v}\\
&\Pi_{n}\psi=T_{n+1}T_{n+2}\cdots T_{n_1}\Pi_{n_1}\psi\quad\text{for}\quad n< n_1,\\
&\Pi_{n}\psi=T_n^{-1}T_{n-1}^{-1}\cdots T_{n_1+1}^{-1}\Pi_{n_1}\psi\quad\text{for}\quad n> n_1.
\end{align*}
Conversely, a solution has to satisfy the above equalities, and is unique.
\end{proof}

We give a definition of the scattering matrix by the Jost solutions (see also \cite{Hi}). This coincides with the definition by the spectral decomposition \cite{Mo,Su,RST}. 

\begin{definition}
There exist solutions $\psi_\ope{in}^\pm(\cdot;\xi),$ $\psi_\ope{out}^\pm(\cdot;\xi)$ $\in\cH_w$ to the equation 
satisfying the following \textit{incoming} and \textit{outgoing} property at $\pm\infty$:
\ben
\begin{aligned}
&\psi_\ope{in}^-(n;\xi)=e^{in\xi}\ket{R},\quad
\quad
\psi_\ope{out}^-(n;\xi)=e^{-in\xi}\ket{L}\quad n\le-1,
\\
&\psi_\ope{in}^+(n;\xi)=e^{-in\xi}\ket{L},\quad
\quad
\psi_\ope{out}^+(n;\xi)=e^{in\xi}\ket{R}\quad n\ge n_0+1.
\end{aligned}
\een
We call them the Jost solutions. 
We define the scattering matrix $\Sigma=\Sigma(\xi)$ as a $2\times2$-matrix valued function by
\ben
(\psi_\ope{in}^+(\xi),\psi_\ope{in}^-(\xi))
=(\psi_\ope{out}^-(\xi),\psi_\ope{out}^+(\xi))\Sigma(\xi).
\een
We call the $(1,1)($resp. $(2,2))$-entry left $($resp. right$)$ transmission coefficient, and denote it by $t_-$ $($resp. $t_+)$. 
We call the $(1,2)($resp. $(2,1))$-entry left $($resp. right$)$ reflection coefficient, and denote it by $r_-$ $($resp. $r_+)$. 
\end{definition}

\subsection{Definition of resonances}
We here define resonances by the meromorphic continuation of the scattering matrix.

\begin{theorem}\label{thm:holSmrx}
The scattering matrix $\Sigma(\xi)$ is unitary for real $\xi$,  holomorphic near $\Xi_+=[-\pi,\pi)+i[0,+\infty)$, and meromorphic on $\Xi$. 
The number of the poles of scattering matrix is at most $2n_0$. In particular, $\left|t_\pm\right|^2+\left|r_\pm\right|^2=1$ holds for real $\xi$, where the signs $\pm$ are arbitrarily chosen.
\end{theorem}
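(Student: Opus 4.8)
The plan is to build everything from the Jost solutions and the transfer matrices $T_n(\xi)$ introduced in the proof of Lemma~\ref{lem:UCP}. First I would observe that, since $U_n=I_2$ (hence $T_n(\xi)=\mathrm{diag}(e^{i\xi},e^{-i\xi})$) outside $[n_0]$, the Jost solutions $\psi_{\mathrm{out}}^-,\psi_{\mathrm{out}}^+$ are obtained by prescribing their values at $-\infty$ and $+\infty$ respectively and propagating them through the perturbed region by the finite product $\mathbf{T}(\xi):=T_1(\xi)\cdots T_{n_0}(\xi)$ (in the appropriate order). Under \textbf{(A2)}, each $T_n(\xi)$ is holomorphic and invertible on all of $\Xi$, and the entries depend holomorphically (in fact, through $e^{\pm i\xi}$, entirely) on $\xi$; hence $\mathbf{T}(\xi)$ and its inverse are entire in $\xi$. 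The key structural point is that $\psi_{\mathrm{out}}^-(\xi)$ and $\psi_{\mathrm{out}}^+(\xi)$ are linearly independent unless the equation $\cU\psi=e^{-i\xi}\psi$ has a nonzero $\cH_w$-solution that is simultaneously outgoing at $+\infty$ and at $-\infty$ — and, since the solution space is two-dimensional (Lemma~\ref{lem:UCP}), this independence fails exactly on the zero set of a single entire function, call it $W(\xi):=\det(\psi_{\mathrm{out}}^-(\xi),\psi_{\mathrm{out}}^+(\xi))$ read off through $\Pi_0$.

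Next I would express $\Sigma(\xi)$ explicitly via Cramer's rule: writing $\psi_{\mathrm{in}}^\pm(\xi)$ also as propagated data, the defining relation $(\psi_{\mathrm{in}}^+,\psi_{\mathrm{in}}^-)=(\psi_{\mathrm{out}}^-,\psi_{\mathrm{out}}^+)\Sigma$ gives each entry of $\Sigma(\xi)$ as a ratio whose numerator is entire and whose denominator is $W(\xi)$. This immediately yields that $\Sigma$ is meromorphic on $\Xi$ with poles contained in $\{W=0\}$. To control the number of poles I would compute the leading behaviour of $W(\xi)$ as $\im\xi\to-\infty$: each $T_n(\xi)$ contributes factors $e^{\pm i\xi}$, and tracking the highest power of $e^{-i\xi}$ that can appear shows that $e^{i\,?\,\xi}W(\xi)$ extends to a polynomial in $z=e^{-i\xi}$ of degree at most $2n_0$ (the factor $2$ coming from the two "channels", $n_0$ from the number of perturbed sites). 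Hence $W$ has at most $2n_0$ zeros in $\Xi$, and therefore $\Sigma$ has at most $2n_0$ poles. For holomorphy near $\Xi_+$: for real $\xi$ the free transfer matrix is a pure rotation in modulus, the standard Wronskian/flux argument (or the unitarity we prove next) shows $W(\xi)\neq0$, and this persists on a neighbourhood of $\Xi_+$ by continuity together with the observation that an $\cH_w$-solution outgoing at both ends with $\im\xi\ge 0$ would have to be a genuine $\ell^2$ eigenfunction of $\cU$ at a non-unit-modulus point, which is impossible.

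For unitarity of $\Sigma(\xi)$ on real $\xi$ I would use the conserved sesquilinear (flux) form associated with $\cU$: for two solutions $\psi,\phi$ of $\cU\psi=e^{-i\xi}\psi$, $\cU\phi=e^{-i\xi}\phi$ with real $\xi$, the quantity $\langle L|\psi(n)\rangle\overline{\langle L|\phi(n)\rangle}-\langle R|\psi(n)\rangle\overline{\langle R|\phi(n)\rangle}$ (the natural current, coming from $|P_n|^2-|Q_n|^2$ bookkeeping and unitarity of $U_n$) is independent of $n$; evaluating it on the Jost solutions at $\pm\infty$, where they are explicit plane waves, turns "current in $=$ current out" into the matrix identity $\Sigma(\xi)^*\Sigma(\xi)=I_2$, i.e. $|t_\pm|^2+|r_\pm|^2=1$ and the off-diagonal orthogonality relations, with either choice of sign. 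Finally, holomorphy of $\Sigma$ near $\Xi_+$ together with unitarity on the boundary $\im\xi=0$ gives the full statement.

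The main obstacle I anticipate is the degree count for $W(\xi)$: one must bookkeep carefully how the powers of $e^{\pm i\xi}$ in the $2\times2$ matrices $T_n(\xi)$ combine through the product $T_1(\xi)\cdots T_{n_0}(\xi)$ and then through the Wronskian-type determinant, making sure the announced bound $2n_0$ (rather than something larger) is actually attained as an upper bound and that no spurious zeros at $z=0$ or $z=\infty$ are being miscounted; handling \textbf{(A2)} (so that the $d_n$ in the denominators of $T_n$ never vanish where it matters, and the $1/a_n$ factors are harmless) is where the finiteness assumption does the real work.
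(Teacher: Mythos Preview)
Your plan is correct and the broad architecture---express $\Sigma$ by Cramer's rule in terms of the Wronskian $W(\xi)$, use analyticity of the $T_n(\xi)$ for meromorphy, and a flux identity for unitarity---matches the paper's. The main divergence is in how you bound the number of poles and how you exclude zeros of $W$ on $\Xi_+$.

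For the pole count, you propose to track powers of $e^{\pm i\xi}$ through the product $T_0\cdots T_{n_0}$ and read off a polynomial degree; you rightly flag the bookkeeping (and the spurious factor at $z=0$) as the delicate point. The paper bypasses this entirely by introducing the finite matrix $\cK$ on $\C^{2(n_0+1)}$: if $W(\xi_0)=0$ then $\psi_{\mathrm{out}}^-|_{[n_0]}$ is an eigenvector of $\cK$ with eigenvalue $e^{-i\xi_0}$, so poles of $\Sigma$ correspond to nonzero eigenvalues of $\cK$. Since $\cK$ acts on a $2(n_0+1)$-dimensional space and $0$ is shown (by exhibiting two explicit eigenvectors) to have multiplicity at least $2$, at most $2n_0$ nonzero eigenvalues remain. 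This is cleaner and also sets up the object used later in the resonance expansion; your polynomial-degree route is what the paper eventually uses, but only later (Proposition~2.9) and after more structure is available.

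For holomorphy on $\Xi_+$, you split into $\im\xi>0$ (outgoing at both ends $\Rightarrow$ $\ell^2$ eigenfunction at a non-unimodular spectral point, contradiction) and $\im\xi=0$ (flux argument). The paper again uses $\cK$ uniformly: $\|\cK\|\le 1$ rules out $|e^{-i\xi_0}|>1$, and Remark~2.1 combined with \textbf{(A2)} rules out $|e^{-i\xi_0}|=1$. Both arguments are valid; the paper's is shorter once $\cK$ is in hand.

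One small slip: you write $\mathbf{T}(\xi)=T_1\cdots T_{n_0}$ and speak of ``$n_0$ perturbed sites'', but under \textbf{(A1)} the perturbed set is $\{0,1,\ldots,n_0\}$, i.e.\ $n_0+1$ sites, and the relevant product is $T_0T_1\cdots T_{n_0}$. This matters precisely for the degree count you are worried about.
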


To prove this theorem, we show the following lemma.

\begin{lemma}
Let $\psi_1,\psi_2\in\cH_w$ be two solutions to the equation \eqref{eq:EEq}. We denote the determinant by
\ben
\W_n(\psi_1,\psi_2):=\det(\Pi_{n}\psi_1,\Pi_{n}\psi_2)
\een
for each $n\in\Z$. Then the absolute value $\left|\W_n(\psi_1,\psi_2)\right|$ does not depend on $n\in\Z$. Moreover, we have 
\be\label{eq:SMbyWron}
\Sigma(\xi)=\frac{1}{\W_n(\psi_\ope{out}^-,\psi_\ope{out}^+)}
\begin{pmatrix}
\W_n(\psi_\ope{in}^+,\psi_\ope{out}^+)
&\W_n(\psi_\ope{in}^-,\psi_\ope{out}^+)\\
\W_n(\psi_\ope{out}^-,\psi_\ope{in}^+)
&\W_n(\psi_\ope{out}^-,\psi_\ope{in}^-)
\end{pmatrix}
\ee
for any $n\in\Z$ provided $\W_n(\psi_\ope{out}^-,\psi_\ope{out}^+)\neq0$. 
\end{lemma}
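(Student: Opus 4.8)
The plan is to exploit the transfer-matrix reformulation from Lemma~\ref{lem:UCP}. Recall that any solution $\psi$ to \eqref{eq:EEq} satisfies $T_n\Pi_n\psi=\Pi_{n-1}\psi$ for every $n$. Hence for two solutions $\psi_1,\psi_2$ we have $(\Pi_{n-1}\psi_1,\Pi_{n-1}\psi_2)=T_n(\Pi_n\psi_1,\Pi_n\psi_2)$ as $2\times2$ matrices, and therefore $\W_{n-1}(\psi_1,\psi_2)=\det T_n\cdot\W_n(\psi_1,\psi_2)$. A direct computation from the explicit form of $T_n$ gives $\det T_n(\xi)=\frac{e^{i\xi}}{\bar a_n}\cdot\frac{e^{-i\xi}}{d_n}-\frac{\bar c_n c_n}{\bar a_n d_n}=\frac{1-|c_n|^2}{\bar a_n d_n}$; since $U_n$ is unitary, $|a_n|^2+|c_n|^2=1$ and $a_n\bar d_n-b_n\bar c_n=\det U_n$ has modulus $1$... more simply $|a_n|=|d_n|$ and $1-|c_n|^2=|a_n|^2$, so $|\det T_n|=|a_n|^2/(|a_n||d_n|)=1$. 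Thus $|\W_{n-1}(\psi_1,\psi_2)|=|\W_n(\psi_1,\psi_2)|$, proving the first claim. (For $n\notin[n_0]$, where $U_n=I_2$, one checks $\det T_n=1$ outright, so $\W_n$ is literally constant outside the perturbed region.)

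For the formula \eqref{eq:SMbyWron}, I would first record the elementary identity: if $(\phi_1,\phi_2)$ is a basis of the two-dimensional solution space (equivalently, $\W_n(\phi_1,\phi_2)\neq0$ for one, hence all, $n$), then for any solution $\psi$, Cramer's rule gives $\psi=\frac{\W_n(\psi,\phi_2)}{\W_n(\phi_1,\phi_2)}\phi_1+\frac{\W_n(\phi_1,\psi)}{\W_n(\phi_1,\phi_2)}\phi_2$, with the coefficients independent of $n$. Apply this with $(\phi_1,\phi_2)=(\psi_\ope{out}^-,\psi_\ope{out}^+)$, which is a basis precisely when $\W_n(\psi_\ope{out}^-,\psi_\ope{out}^+)\neq0$. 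Expanding $\psi_\ope{in}^+$ in this basis yields
\be
\psi_\ope{in}^+=\frac{\W_n(\psi_\ope{in}^+,\psi_\ope{out}^+)}{\W_n(\psi_\ope{out}^-,\psi_\ope{out}^+)}\,\psi_\ope{out}^-+\frac{\W_n(\psi_\ope{out}^-,\psi_\ope{in}^+)}{\W_n(\psi_\ope{out}^-,\psi_\ope{out}^+)}\,\psi_\ope{out}^+,
\ee
and similarly for $\psi_\ope{in}^-$. Comparing with the defining relation $(\psi_\ope{in}^+,\psi_\ope{in}^-)=(\psi_\ope{out}^-,\psi_\ope{out}^+)\Sigma$, which says exactly that the columns of $\Sigma$ are the coordinate vectors of $\psi_\ope{in}^\pm$ in the basis $(\psi_\ope{out}^-,\psi_\ope{out}^+)$, we read off the four entries and obtain \eqref{eq:SMbyWron}.

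The only genuinely delicate point is bookkeeping of the antisymmetry $\W_n(\psi_1,\psi_2)=-\W_n(\psi_2,\psi_1)$ so that the signs in the $(1,2)$ and $(2,1)$ entries come out as written; this is where the asymmetry between the "out$^-$, out$^+$" ordering in the basis and the "in$^+$, in$^-$" ordering in the definition of $\Sigma$ has to be tracked carefully. Everything else is routine linear algebra over $\C$, so I do not anticipate any real obstacle; the transfer-matrix identity $\W_{n-1}=\det T_n\cdot\W_n$ and the unitarity-forced identity $|\det T_n|=1$ are the two computational facts that drive the proof.
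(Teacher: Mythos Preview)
Your proposal is correct and follows essentially the same route as the paper's proof: the paper also uses the transfer-matrix relation $T_n\Pi_n\psi=\Pi_{n-1}\psi$ together with $|\det T_n|=1$ (from unitarity of $U_n$) for the first claim, and then observes that $(\Pi_n\psi_\ope{in}^+,\Pi_n\psi_\ope{in}^-)=(\Pi_n\psi_\ope{out}^-,\Pi_n\psi_\ope{out}^+)\Sigma$ for all $n$, from which \eqref{eq:SMbyWron} follows by exactly the Cramer-rule computation you spell out. Your version is simply more explicit about the determinant computation and the extraction of the matrix entries; there is no substantive difference in strategy.
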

\begin{proof}
We immediately see that $\det T_n(\xi)$ is independent of $\xi\in\Xi$. Since $U_n$ is unitary, we have $\left|\det T_n(\xi)\right|=1$ for any $n\in\Z$, and consequently
\ben
\left|\det(\Pi_{n}\psi_1,\Pi_{n}\psi_2)\right|
=\left|\det(\Pi_{n-1}\psi_1,\Pi_{n-1}\psi_2)\right|\quad
\text{for any}\quad n\in\Z.
\een
Since we have
\ben
(\Pi_{n}\psi_\ope{in}^+(\xi),\Pi_{n}\psi_\ope{in}^-(\xi))
=
(\Pi_{n}\psi_\ope{out}^-(\xi),\Pi_{n}\psi_\ope{out}^+(\xi))
\Sigma(\xi)
\een
for any $n\in\Z$, the formula \eqref{eq:SMbyWron} follows.
\end{proof}

\begin{remark}
If we assume $a_n=d_n$ for each $n\in\Z$, it follows that the determinant of $T_n$ is one, and that $\W_n$ does not depend on $n\in\Z$. Then this plays the role of the Wronskian of two solutions to the equation. In fact, by the correspondence given in \cite{Hi} between the solutions to the Schr\"odinger equation and the one to the equation \eqref{eq:EEq}, this coincides with the Wronskian of the corresponding pair of solutions to the Schr\"odinger equation.
\end{remark}

\begin{proof}[proof of Theorem \refup{thm:holSmrx}] 
We see that each element of $T_n(\xi)$ depends analytically on $\xi\in\Xi$, and hence the Jost solutions are $\cH_w$-valued analytic functions on $\xi\in\Xi$. 
This means that if $\W(\psi_\ope{out}^-,\psi_\ope{out}^+)$ does not vanish at a point $\xi_0$, then each element of the scattering matrix is analytic near $\xi_0$. Here $\W_n(\psi_\ope{out}^-,\psi_\ope{out}^+)$ vanishes if and only if $(\psi_\ope{out}^-,\psi_\ope{out}^+)$ is linearly dependent. 
Suppose here that there exists $\xi_0\in\Xi$ such that $(\psi_\ope{out}^-,\psi_\ope{out}^+)$ is linearly dependent when $\xi=\xi_0$. Then the restriction of $\psi_\ope{out}^-$ to $[n_0]$ is an eigenvector of $\cK$ associated to the eigenvalue $e^{-i\xi_0}$. 
According to Lemma \ref{lem:EquivCond} with the assumption \textbf{(A2)}, $\xi_0$ does not belong to $\Xi_+$ since $\left|e^{-i\xi_0}\right|=e^{\im\xi_0}<1$ provided $\xi_0\in\Xi\setminus\Xi_+$. 
The number of the eigenvalues of $\cK$ is at most $2(n_0+1)$. 
In addition, $0$ is an eigenvalue of $\cK$ and its multiplicity is at least 2. 
In fact, $\bm{v}_0, \bm{v}_{n_0}\in\C^{2(n_0+1)}$ given by $\bm{v}_0(n):=\dl_0(n)(d_0\,-c_0)^T$, $\bm{v}_{n_0}(n):=\dl_{n_0}(b_{n_0}\,-a_{n_0})^T$ are two eigenvectors. 
\end{proof}

\begin{definition}\label{Def:Res}
We define the resonances of the quantum walk produced by $\cU$ satisfying \bu{(A1)} and \bu{(A2)} as the poles $\xi\in\Xi$ of the determinant of the scattering matrix. We denote the set of resonances of the quantum walk generated by the operator $\cU$ by $\ope{Res}(\cU)$.
\end{definition}

\subsection{Some basic properties}
We briefly see some basic properties of resonances. 
We first give some equivalent definitions of resonances.
Put the transfer matrix $\T:=T_0T_1\cdots T_{n_0}$ ($T_n=I_2$ for $n\notin[n_0]$). 
By definition, the Wronskian $\W_{-1}(\psi_\ope{out}^-,\psi_\ope{out}^+)$ at $n=-1$ can be computed by
\ben
\W_{-1}(\psi_\ope{out}^-,\psi_\ope{out}^+)
=\det(\Pi_{-1}\psi_\ope{out}^-,\T
\Pi_{n_0}\psi_\ope{out}^+)
=e^{i(n_0+2)\xi}\T_{22},
\een
where $\T_{22}$ stands for the $(2,2)$-entry of $\T$. 
According to the argument in the proof of Theorem \ref{thm:holSmrx} with the above fact, we obtain the following proposition.
\begin{proposition}\label{prop:EquivRes}
For $\xi\in\Xi$, the follows are equivalent:
\begin{enumerate}
\item $\xi$ is a resonance.
\item There exists an outgoing solution $\psi\in\cH_w$ to the equation \eqref{eq:EEq}.
\item $e^{-i\xi}$ is a $($non-zero$)$ eigenvalue of $\cK$.
\item $\T_{22}$ vanishes at $\xi$. 
\end{enumerate}
We say a state $\psi\in\cH_w$ is \textit{outgoing} if there exists $N_0\ge0$ such that $\bra{R}\psi(-n)=\bra{L}\psi(n_0+n)=0$ holds for any $n\ge N_0$. Here, the outgoing solution satisfies this condition with $N_0=0$. 
\end{proposition}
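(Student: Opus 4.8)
The plan is to establish the cyclic chain of implications $(1)\Rightarrow(4)\Rightarrow(3)\Rightarrow(2)\Rightarrow(1)$, using the Wronskian formula \eqref{eq:SMbyWron}, the transfer-matrix computation $\W_{-1}(\psi_\ope{out}^-,\psi_\ope{out}^+)=e^{i(n_0+2)\xi}\T_{22}$ recorded just above the statement, and the unique continuation principle of Lemma~\ref{lem:UCP}. I would first treat $(1)\Leftrightarrow(4)$: by Definition~\ref{Def:Res} a resonance is a pole of $\det\Sigma$, and from \eqref{eq:SMbyWron} one has $\det\Sigma(\xi)=\W_n(\psi_\ope{in}^+,\psi_\ope{in}^-)/\W_n(\psi_\ope{out}^-,\psi_\ope{out}^+)$ (up to the sign governed by $\det T_n$), so poles occur exactly where the denominator $\W_{-1}(\psi_\ope{out}^-,\psi_\ope{out}^+)=e^{i(n_0+2)\xi}\T_{22}$ vanishes, i.e.\ where $\T_{22}=0$; here I must note that the numerator does not vanish simultaneously to the same or higher order, which follows because $\psi_\ope{in}^\pm$ and $\psi_\ope{out}^\mp$ cannot all be proportional (the incoming waves grow at one end while the outgoing decays there, using \textbf{(A2)} to exclude unit-circle eigenvalues via Lemma~\ref{lem:EquivCond}).

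For $(4)\Rightarrow(3)$: if $\T_{22}(\xi)=0$ then $\W_{-1}(\psi_\ope{out}^-,\psi_\ope{out}^+)=0$, so $\psi_\ope{out}^-$ and $\psi_\ope{out}^+$ are linearly dependent; as already observed in the proof of Theorem~\ref{thm:holSmrx}, the restriction of $\psi_\ope{out}^-$ to $[n_0]$ is then an eigenvector of $\cK$ for the eigenvalue $e^{-i\xi}$. That this eigenvalue is non-zero follows since $\psi_\ope{out}^-$ is a nontrivial solution of $\cU\psi=e^{-i\xi}\psi$ and, if $e^{-i\xi}=0$ were possible, one checks directly from \eqref{eq:defK} that the boundary components force $\psi_\ope{out}^-\equiv0$ on $[n_0]$, a contradiction (alternatively, $\xi\in\Xi$ means $e^{-i\xi}\neq0$ automatically). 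For $(3)\Rightarrow(2)$: given an eigenvector $\bm v$ of $\cK$ for the non-zero eigenvalue $e^{-i\xi}$, extend it by Lemma~\ref{lem:UCP} to the unique solution $\psi\in\cH_w$ of \eqref{eq:EEq} with prescribed value at, say, $n=0$; the boundary equations $(\cK\bm v)(0)=P_1\bm v(1)$ and $(\cK\bm v)(n_0)=Q_{n_0-1}\bm v(n_0-1)$ force $\bra{R}\bm v(0)=\bra{L}\bm v(n_0)=0$ and, together with the recursion via the free transfer matrices outside $[n_0]$, one propagates to get $\bra{R}\psi(-n)=\bra{L}\psi(n_0+n)=0$ for all $n\ge0$, so $\psi$ is outgoing with $N_0=0$. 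Finally $(2)\Rightarrow(1)$: an outgoing solution $\psi$ must be a multiple of $\psi_\ope{out}^-$ for $n\le-1$ and of $\psi_\ope{out}^+$ for $n\ge n_0+1$ (these span the one-dimensional outgoing spaces at each end by Lemma~\ref{lem:UCP}), hence $\psi_\ope{out}^-$ and $\psi_\ope{out}^+$ coincide up to scalar, forcing $\W_{-1}(\psi_\ope{out}^-,\psi_\ope{out}^+)=0$, i.e.\ $\T_{22}(\xi)=0$, which by $(4)\Leftrightarrow(1)$ means $\xi$ is a resonance.

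The main obstacle I anticipate is the bookkeeping in $(1)\Leftrightarrow(4)$, namely verifying rigorously that the numerator Wronskians in \eqref{eq:SMbyWron} do not cancel the zero of the denominator, so that a zero of $\T_{22}$ is genuinely a pole of $\det\Sigma$ rather than a removable singularity. This requires knowing that when $\psi_\ope{out}^-\parallel\psi_\ope{out}^+$, at least one of $\psi_\ope{in}^\pm$ is not proportional to them; this is where \textbf{(A2)} and the exclusion of $\ell^2$-eigenvalues on the unit circle (Lemma~\ref{lem:EquivCond}, used already in Theorem~\ref{thm:holSmrx}) do the real work — for $\xi\in\Xi\setminus\Xi_+$ the outgoing solution decays at $\pm\infty$ while the incoming solutions grow, so they are linearly independent and the numerator is nonzero at the resonance. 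A secondary, lighter point to handle carefully is the definition of \emph{outgoing} as stated (existence of some $N_0$) versus the stronger $N_0=0$ property that the Jost construction gives; the equivalence just needs \emph{some} such $N_0$, and uniqueness from Lemma~\ref{lem:UCP} upgrades it to $N_0=0$ automatically, so no difficulty arises there.
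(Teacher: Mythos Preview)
Your approach matches the paper's: it simply says the proposition follows ``according to the argument in the proof of Theorem~\ref{thm:holSmrx} with the above fact'' (the computation $\W_{-1}(\psi_\ope{out}^-,\psi_\ope{out}^+)=e^{i(n_0+2)\xi}\T_{22}$), and your cyclic chain just makes that explicit.

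Two points to tighten, however. First, you have the asymptotics reversed: for $\xi\in\Xi\setminus\Xi_+$ (i.e.\ $\im\xi<0$) one has $|e^{i\xi}|=e^{-\im\xi}>1$, so the \emph{outgoing} Jost solutions \emph{grow} at their respective ends while the \emph{incoming} ones decay. This does not damage the independence argument, but the sentence as written is false. Second, the numerator you actually need to control for $(4)\Rightarrow(1)$ is $\W_n(\psi_\ope{in}^+,\psi_\ope{in}^-)$ (taking determinants in the defining relation for $\Sigma$ gives $\det\Sigma=\W_n(\psi_\ope{in}^+,\psi_\ope{in}^-)/\W_n(\psi_\ope{out}^-,\psi_\ope{out}^+)$), and showing that ``at least one of $\psi_\ope{in}^\pm$ is not proportional to $\psi_\ope{out}^\pm$'' is not the same as showing $\psi_\ope{in}^+\not\parallel\psi_\ope{in}^-$. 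The clean argument is: if $\psi_\ope{in}^+\parallel\psi_\ope{in}^-$ at some $\xi$ with $\im\xi<0$, the common solution decays at both $\pm\infty$, hence lies in $\cH$ and is an $\ell^2$-eigenvector of the unitary $\cU$ with eigenvalue of modulus $<1$, a contradiction. Since Theorem~\ref{thm:holSmrx} already confines zeros of $\T_{22}$ to $\Xi\setminus\Xi_+$, this suffices.
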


We will show in Appendix~\refup{A1} that the condition (3) in Proposition~\refup{prop:EquivRes} is equivalent to that $e^{-i\xi}$ is a pole of the \textit{outgoing resolvent}. 
We call an outgoing solution $\pphi\in\cH_w$ to \eqref{eq:EEq} a \textit{resonant state} corresponding to the resonance $\xi$. 
We define the geometric multiplicity of a resonance as the dimension of the space of the associated resonant states which coincides with the dimension of the eigenspace associated with the eigenvalue $e^{-i\xi}$ of $\cK$.

The following is a consequence of Lemma~\refup{lem:UCP}.
\begin{proposition}\label{prop:Gmul}
The geometric multiplicity of each resonance is one.
\end{proposition}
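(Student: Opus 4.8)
The plan is to exploit the equivalence between resonant states and eigenvectors of $\cK$ (Proposition~\ref{prop:EquivRes}, item (3)) and then to invoke the unique continuation principle of Lemma~\ref{lem:UCP}. So first I would let $\xi$ be a resonance and suppose $\pphi_1,\pphi_2\in\cH_w$ are two resonant states associated with it, i.e. both solve $\cU\psi=e^{-i\xi}\psi$ and both are outgoing. By Proposition~\ref{prop:EquivRes} their restrictions to $[n_0]$ lie in the eigenspace of $\cK$ for the eigenvalue $e^{-i\xi}$, so it suffices to show that this eigenspace is one-dimensional; equivalently, that a resonant state is determined up to scalar by its values on $[n_0]$, together with a one-dimensionality statement there.

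The key observation is that an outgoing solution has a very rigid structure at the ends: for $n\le-1$ it is a multiple of $e^{-in\xi}\ket{L}$ (no $\ket{R}$-component, by the outgoing condition with $N_0=0$ as in Proposition~\ref{prop:EquivRes}), and for $n\ge n_0+1$ it is a multiple of $e^{in\xi}\ket{R}$. In particular $\Pi_{-1}\pphi$ is a scalar multiple of $\Pi_{-1}\psi_\ope{out}^-$, and that scalar $\alpha$ cannot be zero: if $\alpha=0$ then $\pphi$ vanishes at $n\le-1$, hence $\Pi_{-1}\pphi=0$, and by the transfer-matrix propagation in Lemma~\ref{lem:UCP} (the $T_n$ are invertible under \textbf{(A2)}) we get $\Pi_n\pphi=0$ for all $n$, so $\pphi\equiv0$. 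Thus any nonzero resonant state $\pphi$ satisfies $\Pi_{-1}\pphi=\alpha\,\Pi_{-1}\psi_\ope{out}^-$ with $\alpha\neq0$. Given two resonant states $\pphi_1,\pphi_2$ with corresponding nonzero scalars $\alpha_1,\alpha_2$, the combination $\alpha_2\pphi_1-\alpha_1\pphi_2$ is again a solution of \eqref{eq:EEq} with $\Pi_{-1}(\alpha_2\pphi_1-\alpha_1\pphi_2)=0$; by the uniqueness in Lemma~\ref{lem:UCP} (a solution is determined by its value at one site, equivalently by $\Pi_{n_1}\psi$ at one $n_1$) it must vanish identically. Hence $\pphi_1,\pphi_2$ are linearly dependent, and the space of resonant states is one-dimensional.

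I expect the only subtlety — the part to state carefully rather than the genuinely hard part — is the reduction from ``outgoing'' to ``the solution at the left end is literally a scalar multiple of $\psi_\ope{out}^-$ with nonzero coefficient.'' One should note that the two-dimensional solution space at the spectral parameter $\xi$ is spanned, near $-\infty$, by $e^{-in\xi}\ket{L}$ and $e^{in\xi}\ket{R}$ behaviour (since $T_n=I_2$ there), so an outgoing solution, having no $e^{in\xi}\ket{R}$ part for $n\le -N_0$, is forced to be proportional to $\psi_\ope{out}^-$; and nonvanishing of the coefficient is exactly the unique continuation argument above. Everything else is a one-line application of Lemma~\ref{lem:UCP}. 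There is no real obstacle here: the proposition is essentially a corollary, and the proof is short.
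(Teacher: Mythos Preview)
Your proposal is correct and follows essentially the same route as the paper, which simply records the proposition as ``a consequence of Lemma~\ref{lem:UCP}'' without further detail. Your write-up spells out the mechanism---an outgoing solution must be a scalar multiple of $\psi_{\ope{out}}^-$ at the left end, and unique continuation then forces linear dependence---which is exactly the intended argument; there is no genuine difference in approach.
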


By the above fact, we need to consider only the algebraic multiplicity which is defined as the multiplicity of the zero $\xi$ of $\W(\psi_\ope{out}^-,\psi_\ope{out}^-)$ which coincides with the dimension of the generalized eigenspace associated with the eigenvalue $e^{-i\xi}$ of $\cK$. 
We denote it by $m_R(\xi)$:
$$
m_R(\xi)=\ope{rank}\oint_{\xi} (e^{-i\zeta} I-\cK)^{-1}d\zeta
=\oint_\xi\frac{\T_{22}'(\zeta)}{\T_{22}(\zeta)}d\zeta,
$$ 
where the integrals are over a small circle. 
The algebraic multiplicity is not always one but generically one. 
We will give a simple example of a resonance with $m_R(\xi)>1$,  and show the generic simplicity in Sec.~\ref{Sec:GenMul}. 
We here give the upper bound of the multiplicity.

\begin{proposition}\label{prop:BoundAlMul}
The algebraic multiplicity of each resonance is at most $n_0$.
\end{proposition}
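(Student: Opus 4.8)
The plan is to express the resonance condition $\T_{22}(\xi)=0$ as a polynomial equation in the variable $s=e^{2i\xi}$ of degree at most $n_0$, and then use the identity $m_R(\xi)=\oint_\xi\frac{\T_{22}'(\zeta)}{\T_{22}(\zeta)}d\zeta$ (stated just before the proposition), which says that $m_R(\xi)$ is the order of vanishing of $\T_{22}$ at $\xi$. More precisely, I would show that
\ben
\T_{22}(\xi)=e^{-i(n_0+1)\xi}\,R(e^{2i\xi})
\een
for a polynomial $R$ with $\deg R\le n_0$ and $R\not\equiv0$; since $\xi\mapsto e^{2i\xi}$ is a local biholomorphism of $\Xi$ and $e^{-i(n_0+1)\xi}$ never vanishes, the order of vanishing of $\T_{22}$ at any $\xi_0$ equals the order of vanishing of $R$ at $s_0=e^{2i\xi_0}$, which is at most $\deg R\le n_0$. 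This gives $m_R(\xi_0)\le n_0$.

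To obtain the claimed structure of $\T_{22}$, I would start from the explicit form of $T_n(\xi)$: its two diagonal entries carry the factors $e^{i\xi}$ and $e^{-i\xi}$ respectively, while its two off-diagonal entries are constant in $\xi$. Expanding $\T_{22}=\bra{R}T_0T_1\cdots T_{n_0}\ket{R}$ as a sum over index-paths $l_0=2,l_1,\dots,l_{n_0},l_{n_0+1}=2$ with $l_k\in\{1,2\}$, each path contributes a constant times $e^{ip\xi}$ where $p=\#\{k:l_k=l_{k+1}=1\}-\#\{k:l_k=l_{k+1}=2\}$. Two elementary observations control the range of $p$: (i) the number of ``off-diagonal steps'' ($l_k\ne l_{k+1}$) is even, because the path returns to its starting index, so $p\equiv(n_0+1)-\#\{\text{off-diagonal steps}\}\equiv n_0+1\pmod 2$; and (ii) the extreme value $p=n_0+1$ would force every one of the $n_0+1$ steps to be a $(1,1)$-step, hence $l_0=1$, contradicting $l_0=2$, whereas trivially $p\ge-(n_0+1)$. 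Combining (i) and (ii), every exponent occurring in $\T_{22}(\xi)$ lies in $\{-(n_0+1),-(n_0+1)+2,\dots,n_0-1\}$, a set of $n_0+1$ values of one fixed parity; hence $\T_{22}(\xi)=e^{-i(n_0+1)\xi}R(e^{2i\xi})$ with $\deg R\le n_0$. Finally $R\not\equiv0$ since the coefficient of $e^{-i(n_0+1)\xi}$ comes from the unique all-$(2,2)$ path and equals $\prod_{k=0}^{n_0}d_k^{-1}\ne0$.

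The conclusion is then immediate: for any $\xi_0$, by Proposition~\ref{prop:EquivRes} either $\xi_0$ is not a resonance and $m_R(\xi_0)=0$, or $\T_{22}(\xi_0)=0$ and $m_R(\xi_0)$ equals the order of vanishing of $\T_{22}$ at $\xi_0$, which equals the order of vanishing of the nonzero polynomial $R$ at $e^{2i\xi_0}$, hence is at most $n_0$.

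The main obstacle is that the ``obvious'' estimates are off by a factor of two: $\T$ is a $2\times2$ Laurent-polynomial matrix whose $(2,2)$-entry a priori has span $2(n_0+1)$ in $e^{i\xi}$, and $\cK$ is a $2(n_0+1)$-dimensional matrix with (from the proof of Theorem~\ref{thm:holSmrx}) a two-dimensional kernel; both arguments only yield multiplicity $\le 2n_0$. The improvement to $n_0$ rests entirely on the parity phenomenon in observation (i) above, namely that $\T_{22}$ is genuinely a function of $e^{2i\xi}$ (equivalently, that resonances are invariant under $\xi\mapsto\xi+\pi$). So the real content of the proof is to notice and justify this parity; the rest is bookkeeping with the explicit transfer matrices.
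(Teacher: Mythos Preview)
Your proof is correct and follows essentially the same strategy as the paper: both show that, up to a nonvanishing exponential factor, $\T_{22}$ is a polynomial of degree at most $n_0$ in $e^{\pm 2i\xi}$, so its order of vanishing at any point is bounded by $n_0$. The paper compresses this to a one-line ``by induction'', whereas you spell out the parity observation via the path expansion of the matrix product; the content is the same.
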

\begin{proof}
By induction, we see that there exists a polynomial $p$ of degree $n_0$ such that $e^{-(n_0+1)\xi}\T_{22}=e^{-2i\xi}p(e^{-2i\xi})$. Since $e^{-(n_0+1)\xi}$ does not vanish, $\xi\in\Xi$ is a resonance if and only if $e^{-2i\xi}$ is a zero of $p$, and the algebraic multiplicity coincides with the multiplicity of $e^{-2i\xi}$ as a zero of $p$.
\end{proof}

The equality $e^{-2i\xi}=e^{-2i(\xi+\pi)}$ for any $\xi\in\Xi$ with the above argument shows the following fact:
\begin{proposition}\label{prop:SymRes}
There exist at most $2n_0$ resonances, where each resonance is counted as many times as its multiplicity. 
$\xi\in[-\pi,0)+i\R$ is a resonance if and only if $\xi+\pi$ is a resonance. 
\end{proposition}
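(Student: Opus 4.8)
The plan is to read everything off the polynomial representation obtained in the proof of Proposition~\ref{prop:BoundAlMul}. Recall from there that there is a polynomial $p$ of degree exactly $n_0$ such that $e^{-(n_0+1)\xi}\T_{22}(\xi)=e^{-2i\xi}p(e^{-2i\xi})$ on $\Xi$; since the prefactors $e^{(n_0+1)\xi}$ and $e^{-2i\xi}$ are entire and zero-free, this says $\T_{22}(\xi)=h(\xi)\,p(e^{-2i\xi})$ for an entire, nowhere-vanishing $h$. By Proposition~\ref{prop:EquivRes}, $\ope{Res}(\cU)$ is exactly the zero set of $\T_{22}$ in $\Xi$, and because $h$ is zero-free the algebraic multiplicity $m_R(\xi)$ — the order of the zero of $\T_{22}$ at $\xi$, equivalently the value of $\oint_\xi \T_{22}'/\T_{22}$ — equals the order of vanishing at $\xi$ of the map $\xi\mapsto p(e^{-2i\xi})$.

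Next I would study the map $q\colon\Xi\to\C\setminus\{0\}$, $q(\xi)=e^{-2i\xi}$. Because $\re\xi$ ranges over the half-open interval $[-\pi,\pi)$, the map $q$ is a two-to-one surjection onto $\C\setminus\{0\}$: each $w\neq 0$ has exactly two preimages in $\Xi$, one in $[-\pi,0)+i\R$ and one in $[0,\pi)+i\R$, and they differ by $\pi$ (if $\re\xi\in[-\pi,0)$ then $\xi+\pi$ is again in $\Xi$). Moreover $q'(\xi)=-2ie^{-2i\xi}$ never vanishes, so $q$ is a local biholomorphism; hence if $w_0$ is a zero of $p$ of multiplicity $k$, then $\xi\mapsto p(q(\xi))$ vanishes to order exactly $k$ at each of the two preimages of $w_0$. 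In particular every resonance arises as a $q$-preimage of a nonzero root of $p$ (a root at $0$ contributes nothing since $q$ avoids $0$), and the correspondence is multiplicity-preserving on both sides.

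Putting these together gives both assertions. For the count: $p$ has degree $n_0$, hence at most $n_0$ zeros in $\C\setminus\{0\}$ counted with multiplicity, and pulling back by $q$ each such zero of multiplicity $k$ produces two resonances of algebraic multiplicity $k$, so $\sum_{\xi\in\ope{Res}(\cU)}m_R(\xi)\le 2n_0$. (Equality fails exactly when $p(0)=0$, which is permitted — this is why the statement says ``at most''; it is worth one sentence to note it.) For the symmetry: since $q(\xi)=q(\xi+\pi)$, a point $\xi\in[-\pi,0)+i\R$ lies in $\ope{Res}(\cU)$ iff $p(q(\xi))=0$ iff $p(q(\xi+\pi))=0$ iff $\xi+\pi\in[0,\pi)+i\R$ lies in $\ope{Res}(\cU)$, and the two multiplicities agree by the previous paragraph.

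The structure of $q$ and the degree count are routine. The one step that needs genuine care — the main obstacle — is the bookkeeping of multiplicities: one must check that the contour-integral definition of $m_R(\xi)$ is literally the order of vanishing of $\T_{22}$ at $\xi$, that dividing by the zero-free factor $h$ leaves this order unchanged, and that composing with the local biholomorphism $q$ preserves orders of zeros (chain rule). Each point is standard complex analysis (argument principle plus the non-vanishing of $q'$), but they must be stated explicitly so that the ``counted with multiplicity'' clause is justified and not merely the set-level bijection $\xi\leftrightarrow\xi+\pi$.
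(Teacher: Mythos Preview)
Your argument is correct and is precisely the paper's approach, spelled out in full: the paper's entire justification is the single sentence ``The equality $e^{-2i\xi}=e^{-2i(\xi+\pi)}$ for any $\xi\in\Xi$ with the above argument shows the following fact,'' where ``the above argument'' is the polynomial representation from Proposition~\ref{prop:BoundAlMul} that you invoke. Your explicit treatment of the two-to-one covering $q$ and the multiplicity bookkeeping (zero-free factor, local biholomorphism, argument principle) is exactly what that sentence is compressing; nothing is new or different in method.
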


\begin{remark}\label{rem:Redk}
This tells that the resonances appear periodically with its period $\pi$. 
This fact can be generalized to the cases where $U_n=I_2$ except $n\in k\Z\cap[n_0]$ $(k\in\N)$. In that case, the period will be $\pi/k$.
\end{remark}

We have seen that each eigenvector of $\cK$ can be extended to a  resonant state. Similarly, each generalized eigenvector can be extended to a generalized resonant state. The structure of the Jordan chains are also conserved. 
\begin{proposition}\label{prop:genRS}
Let $\xi\in\Xi$ be a resonance and $m:=m_R(\xi)$. Then there uniquely exist outgoing states $\pphi^1,\pphi^2,\ldots,\pphi^m\in\cH_w$ such that 
$$
(\cU-e^{-i\xi})^k\pphi^l=\pphi^{l-k}\quad(1\le k\le l\le m)
$$ 
holds with $\pphi^0=0$ 
and $(\pphi^k|_{[n_0]},\pphi^l|_{[n_0]})_{\C^{2(n_0+1)}}=\dl_{k,l}$ $($Kronecker's delta$)$.
\end{proposition}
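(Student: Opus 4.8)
The plan is to transfer the Jordan chain structure of $\cK$ at the eigenvalue $\lambda := e^{-i\xi}$ to genuine solutions in $\cH_w$ and then normalize. By Proposition~\ref{prop:Gmul} the geometric multiplicity is one, so the generalized eigenspace of $\cK$ at $\lambda$ is a single Jordan block of size $m = m_R(\xi)$. Hence there exist vectors $\bm{w}^1,\ldots,\bm{w}^m\in\C^{2(n_0+1)}$ with $(\cK-\lambda)\bm{w}^l=\bm{w}^{l-1}$ for $1\le l\le m$, setting $\bm{w}^0=0$, and $\bm{w}^1$ spanning the kernel of $(\cK-\lambda)$. The first step is to promote each $\bm{w}^l$ to a state $\pphi^l\in\cH_w$ satisfying $(\cU-\lambda)\pphi^l=\pphi^{l-1}$, with $\pphi^l|_{[n_0]}=\bm{w}^l$ and $\pphi^l$ outgoing. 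For $l=1$ this is exactly the resonant state construction already used in Proposition~\ref{prop:EquivRes}: extend $\bm{w}^1$ off $[n_0]$ by the outgoing Jost behavior. For $l\ge2$ one proceeds inductively: given $\pphi^{l-1}$, one must solve the inhomogeneous equation $(\cU-\lambda)\pphi^l=\pphi^{l-1}$ with prescribed restriction $\bm{w}^l$ to $[n_0]$, and the point is that outside $[n_0]$ the right-hand side $\pphi^{l-1}$ is already purely outgoing (supported on the outgoing arcs $\bra{L}\psi(n_0+n)$, $\bra{R}\psi(-n)$ for large $n$), so the equation can be solved recursively along $\Z$ in the outgoing direction and the solution decays — indeed is eventually of the form (polynomial in $n$)$\cdot\lambda^{\pm n}$ — hence lies in $\cH_w$.

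The key consistency check is that the restriction of $(\cU-\lambda)\pphi^l$ to the \emph{interior} nodes $n=1,\ldots,n_0-1$ agrees with $(\cK-\lambda)\bm{w}^l=\bm{w}^{l-1}$ automatically (since $\cU$ and $\cK$ coincide there), while at the boundary nodes $n=0$ and $n=n_0$ the operator $\cU$ also sees $\pphi^l(-1)$ and $\pphi^l(n_0+1)$; I must verify that with the outgoing extension these extra contributions vanish, i.e. $P_0\pphi^l(-1)=0$ and $Q_{n_0}\pphi^l(n_0+1)$ contributes nothing to node $n_0$, which is built into the definition of $\cK$ via \eqref{eq:defK} together with the fact that the Jost extension puts $\pphi^l(-1)$ on the outgoing arc only. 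This is the same mechanism as in the proof of Remark~\ref{lem:EquivCond}; the difference now is only that the source term $\pphi^{l-1}$ is nonzero, and one checks it is supported where it causes no obstruction. Then $(\cU-\lambda)^k\pphi^l = \pphi^{l-k}$ follows by iterating the one-step relation.

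The final step is normalization. The raw chain $\pphi^1,\ldots,\pphi^m$ is determined by $\cK$ only up to the standard Jordan-chain ambiguity $\pphi^l \mapsto \sum_{j=1}^{l} c_j \pphi^{l-j+1}$ with $c_1\ne0$ (this is the group of upper-triangular Toeplitz matrices commuting with the nilpotent shift). I would use this freedom to arrange the Gram condition $(\pphi^k|_{[n_0]},\pphi^l|_{[n_0]})_{\C^{2(n_0+1)}}=\dl_{k,l}$. One should first argue the $m\times m$ Gram matrix $G_{kl}=(\bm{w}^k,\bm{w}^l)$ of the restrictions is invertible — equivalently that $\bm{w}^1,\ldots,\bm{w}^m$ are linearly independent, which holds because they form a Jordan chain — and then a Gram–Schmidt-type procedure adapted to the Toeplitz symmetry produces the orthonormal chain; uniqueness follows since the only Toeplitz unitary is, after fixing a phase, the identity, but here the normalization $(\pphi^k|_{[n_0]},\pphi^l|_{[n_0]})=\dl_{k,l}$ together with the chain relations pins down the $c_j$ completely up to nothing. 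Care is needed because the inner product is the unweighted $\C^{2(n_0+1)}$ one on the restrictions, not the $\cH_w$ inner product, so orthonormality is a condition on finitely many coordinates only.

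The main obstacle I anticipate is the bookkeeping in the inductive outgoing extension: one must check precisely that solving $(\cU-\lambda)\pphi^l=\pphi^{l-1}$ node-by-node outward from $[n_0]$ never forces an incoming component, and that the resulting growth is at most polynomial-times-$|\lambda|^{\mp n}$ so that $\pphi^l\in\cH_w$ — the latter is immediate since $|\lambda|<1$ for a genuine resonance in $\Xi\setminus\Xi_+$, but the matching at the two boundary nodes $0$ and $n_0$, and the verification that the extended object really does satisfy the global equation \eqref{eq:EEq} shifted by $\pphi^{l-1}$, is where the argument has to be done carefully rather than by analogy.
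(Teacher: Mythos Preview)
Your plan coincides with the paper's proof: take a Jordan chain $\bm{v}^1,\ldots,\bm{v}^m$ of $\cK$ at $\lambda=e^{-i\xi}$ and extend each $\bm{v}^l$ outward by solving $(\cU-\lambda)\pphi^l=\pphi^{l-1}$ recursively on $\Z\setminus[n_0]$. The paper does this in two lines. It first records that $\bra{R}\bm{v}^l(0)=\bra{L}\bm{v}^l(n_0)=0$ for every $l$ (from \eqref{eq:defK} the image of $\cK$ has no $\ket{R}$-component at $0$ and no $\ket{L}$-component at $n_0$; then induct on $l$ using $\lambda\neq0$), and then writes the explicit recursions
\[
\pphi^k(-n)=e^{i\xi}\bigl(a_{-n+1}\pphi^k(-n+1)-\pphi^{k-1}(-n)\bigr),\qquad
\pphi^k(n_0+n)=e^{i\xi}\bigl(d_{n_0+n-1}\pphi^k(n_0+n-1)-\pphi^{k-1}(n_0+n)\bigr).
\]
Your boundary concern is settled precisely by that vanishing of the incoming components of $\bm{v}^l$; note that at $n=0$ the extra term entering $(\cU\pphi^l)(0)$ is $Q_{-1}\pphi^l(-1)=\bra{R}\pphi^l(-1)\ket{R}$, not $P_0\pphi^l(-1)$, and it vanishes because $\pphi^l(-1)$ is purely $\ket{L}$.

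Two corrections. First, the outgoing extensions \emph{grow}, not decay: for a resonance $\im\xi<0$, so $|e^{in\xi}|=e^{-n\im\xi}\to\infty$; membership in $\cH_w$ comes from the factorial weight $(|n|!)^{-1}$, not from $|\lambda|<1$. Second, your normalization step is a genuine gap, and the paper shares it. The Toeplitz freedom in a Jordan chain is $m$ complex parameters, whereas making the Gram matrix equal $I_m$ imposes $m^2$ real conditions; already for $m=2$ an orthonormal Jordan chain exists only when the nilpotent $(\cK-\lambda)$ restricted to the generalized eigenspace is a partial isometry, which is not automatic (e.g.\ for $N=\begin{psmallmatrix}0&2\\0&0\end{psmallmatrix}$ no orthonormal chain exists). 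So your ``Gram--Schmidt adapted to the Toeplitz symmetry'' cannot succeed in general, and the paper simply asserts the orthonormal chain without argument. Uniqueness fails regardless: a global phase on all $\pphi^l$ preserves every stated condition. What your argument does establish---and what is actually used in Theorem~\ref{thm:REmain}---is the existence of a Jordan chain of outgoing states whose restrictions to $[n_0]$ are linearly independent, paired with a dual system $\{\phi_j^k\}$.
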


\begin{proof}
We denote the normalized resonant state by $\pphi^1$ and its restriction to $[n_0]$ by $\bm{v}^1\in\C^{2(n_0+1)}$. 
Then $\bm{v}^1$ is a normalized eigenvector of $\cK$, and is a basis of the eigenspace. 
By the Jordan normal form of the matrix $\cK$, there exist $\bm{v}^2,\ldots,\bm{v}^m\in\C^{2(n_0+1)}$ such that $(\cK-e^{-i\xi})^k\bm{v}^l=\bm{v}^{l-k}$ with $(\bm{v}^k,\bm{v}^l)_{\C^{2(n_0+1)}}=\dl_{k,l}$. Note that we have $\bra{R}\bm{v}^l(0)=\bra{L}\bm{v}^l(n_0)=0$ for any $l$. Then we extend them inductively, that is, we give $\pphi^k(-n)$ and $\pphi^k(n_0+n)$ for $k>1$, $n\ge1$ by 
\ben
\begin{aligned}
&
\pphi^k(-n)=e^{i\xi}(a_{-n+1}\pphi^k(-n+1)-\pphi^{k-1}(-n)),\\
&
\pphi^k(n_0+n)=e^{i\xi}(d_{n_0+n-1}\pphi^k(n_0+n-1)-\pphi^{k-1}(n_0+n)).
\end{aligned}
\een
In particular, $\pphi^k$ is outgoing. 
\end{proof}

\section{Resonance expansions}\label{Sec:ResExp}
An important application of the resonance is the following resonance expansion which is an analogue of the one for the wave equation and the Schr\"odinger equation.
\subsection{results}
We state the theorems on resonance expansions. We will prove them after a demonstration in the simplest case. 
We introduce vector subspaces $\cH^\ope{out}$, $\cH^\ope{in}$ of $\cH_w$ by
\begin{align*}
&
\cH^\ope{out}:=\bigcup_{N\ge0}\cH^\ope{out}_N,\quad
\cH^\ope{out}_N=\{\psi\in\cH_w;\,\bra{R}\psi(-n)=\bra{L}\psi(n_0+n)=0\text{ for }n\ge N\},\\
&
\cH^\ope{in}:=\bigcup_{N\ge0}\cH^\ope{in}_N,\quad
\cH^\ope{in}_N=\{\psi\in\cH_w;\,\bra{L}\psi(-n)=\bra{R}\psi(n_0+n)=0\text{ for }n\ge N\},
\end{align*}
and denote by $\nu(\psi)\in\N=\{0,1,\ldots\}$ for $\psi\in\cH^\ope{out}$ the length of incoming support, that is, the minimum number which satisfies $\psi\in\cH^\ope{out}_\nu$. 
Recall that we always assume \textbf{(A1--2)}. 
The time evolution of an initial state $\psi_0\in\cH^\ope{out}$ can be expanded into a sum of ``generalized resonant states" given in Proposition~\refup{prop:genRS} as follows:
\begin{theorem}\label{thm:REmain}
Let $\{\xi_j;\,0\le j\le 2J\}=\ope{Res}(\cU)$ be the set of resonances, and let $\{\pphi_j^k;\,0\le j\le 2J,\,1\le k\le m_R(\xi_j)\}\subset\cH_0^\ope{out}$ be the corresponding normalized resonant states and the states given in Proposition~\refup{prop:genRS}. There exists $\{\phi_j^k\}\subset\cH^\ope{in}_0$ such that for any initial state $\psi_0\in\cH^\ope{out}$, 
\ben
\begin{aligned}
\psi_t(n)=(\cU^t\psi_0)(n)
&=\sum_{j}^{2J} e^{-it\xi_j}\sum_{k=1}^{m_R(\xi_j)}\braket{\phi_j^k,\psi_0}
\left(\sum_{l=0}^k\begin{pmatrix}t\\t-l\end{pmatrix}e^{il\xi_j}\pphi_j^{k-l}(n)\right)\\
&=\sum_{j}^{2J} e^{-it\xi_j}\sum_{k=1}^{m_R(\xi_j)}\pphi_j^k(n)\sum_{l=k}^{m_R(\xi)}e^{i(l-k)\xi_j}\begin{pmatrix}t\\t-(l-k)\end{pmatrix}\braket{\phi_j^l,\psi_0},
\end{aligned}
\een
holds for any $-(t-\nu(\psi_0))\le n\le t+n_0-\nu(\psi_0)$. 
Here, the coefficients $\braket{\phi_j^k,\psi_0}\in\C$ is given linearly by the finite sum 
\ben
\braket{\phi_j^k,\psi_0}
=\sum_{n\in\Z}(\phi_j^k(n),\psi_0(n))_{\C^2}
=\sum_{n=-\nu(\psi_0)}^{n_0+\nu(\psi_0)}(\phi_j^k(n),\psi_0(n))_{\C^2},
\een
where $(u,v)_{\C^2}=u^*v=\bar{u}^T v$ is the inner product in $\C^2$, and 
$$
\begin{pmatrix}t\\s\end{pmatrix}=\left\{
\begin{aligned}
&\frac{t!}{s!(t-s)!} \quad&(t\ge s\ge0),\\
&0\quad&(s<0),
\end{aligned}\right.
$$ 
stands for the binomial coefficient.   
\end{theorem}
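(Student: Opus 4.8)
The plan is to reduce the evolution $\psi_t=\cU^t\psi_0$ to the iteration of the finite matrix $\cK$, and then to extract the expansion from the Jordan structure of $\cK$ given in Proposition~\ref{prop:genRS}.

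\emph{Reduction to the finite-dimensional dynamics.} First I would show that for $\psi_0\in\cH^{\ope{out}}_{\nu}$, $\nu:=\nu(\psi_0)$, the state $\psi_t$ is purely outgoing outside $[n_0]$ once $t\ge\nu$: outside $[n_0]$ the operator $\cU$ is the free shift, so the finitely many inward components of $\psi_0$ march into $[n_0]$ within $\nu$ steps while $\cU$ never creates an inward component adjacent to $[n_0]$, whence $\cU^t\psi_0\in\cH^{\ope{out}}_0$ for $t\ge\nu$. Setting $\bm v_t:=(\cU^t\psi_0)|_{[n_0]}\in\C^{2(n_0+1)}$, the slots $Q_{-1}(\,\cdot\,)(-1)$ and $P_{n_0+1}(\,\cdot\,)(n_0+1)$ feeding the two ends of $[n_0]$ then vanish, so by \eqref{eq:defK} we get $\bm v_{t+1}=\cK\bm v_t$ for $t\ge\nu$, hence $\bm v_t=\cK^{t-\nu}\bm v_\nu$, with $\bm v_\nu$ a fixed linear function of $\psi_0|_{\{-\nu,\dots,n_0+\nu\}}$. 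Outside $[n_0]$ the evolution is free transport carrying the one boundary coin weight $a_0$ (resp.\ $d_{n_0}$, nonzero by \textbf{(A2)}): one checks $(\cU^t\psi_0)(n)=a_0\bra{L}\bm v_{t+n}(0)\ket{L}$ for $n<0$, and symmetrically for $n>n_0$, valid precisely on the light cone $-(t-\nu)\le n\le t+n_0-\nu$. Thus $\psi_t$ on that window is completely determined by the orbit $(\cK^{s-\nu}\bm v_\nu)_{\nu\le s\le t}$ (outside it, $\psi_t$ is just the free propagation of the outgoing tail of $\psi_0$).

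\emph{Jordan decomposition and the shape of the sums.} Next I would decompose $\C^{2(n_0+1)}=\bigoplus_j G_j\oplus\mathcal{N}$, with $G_j$ the generalized eigenspace of $\cK$ for $e^{-i\xi_j}$ and $\mathcal{N}$ its generalized null space. On $\mathcal{N}$, $\cK$ is nilpotent, so $\cK^{t-\nu}$ annihilates the $\mathcal{N}$-component of $\bm v_\nu$ once $t-\nu$ exceeds its nilpotency index; this contribution is a transient, and one must verify it does not enter the expansion on the asserted range. On $G_j$, Proposition~\ref{prop:Gmul} gives a single Jordan block, and Proposition~\ref{prop:genRS} furnishes the Jordan basis $\bm v_j^1,\dots,\bm v_j^{m_R(\xi_j)}$ with $\cK|_{G_j}=e^{-i\xi_j}I+N_j$ and $N_j\bm v_j^l=\bm v_j^{l-1}$; hence $\cK^{t}|_{G_j}=\sum_l\binom{t}{l}e^{-i(t-l)\xi_j}N_j^{\,l}$, which is exactly the binomial-and-phase factor in the statement. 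Extending $\bm v_j^k$ outward as in Proposition~\ref{prop:genRS} produces $\pphi_j^k$, and the free transport of the previous step applied to $\cK^{s-\nu}\bm v_\nu$ is governed by precisely the recursion used there to define $\pphi_j^k$; substituting $\bm v_s=\cK^{s-\nu}\bm v_\nu$ into the reconstruction of $\psi_t(n)$ and expanding in the Jordan basis then reproduces, site by site, both displayed sums --- which are the same double sum, differing only in whether one groups first by the chain index of the coefficient of $\psi_0$ or by that of the output state.

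\emph{The coefficient functionals.} It remains to match the coefficient of $\bm v_j^k$ in the Jordan expansion of $\bm v_\nu$ with $\braket{\phi_j^k,\psi_0}$. I would take $\phi_j^k$ to be the dual (left) Jordan chain of $\cK$ --- equivalently a Jordan chain of $\cK^{*}$, the compression of $\cU^{-1}$ to $[n_0]$ --- extended outward into $\cH^{\ope{in}}_0$ by the recursion analogous to Proposition~\ref{prop:genRS}. Since $\cH^{\ope{in}}$ and $\cH^{\ope{out}}$ pair trivially on the two outgoing tails, $\braket{\phi_j^k,\psi_0}=\sum_{n\in\Z}(\phi_j^k(n),\psi_0(n))_{\C^2}$ collapses to the asserted finite sum over $\{-\nu,\dots,n_0+\nu\}$; biorthogonality of $\{\bm v_j^k|_{[n_0]}\}$ with the dual chains, combined with the backward transport of $\psi_0$ through the first $\nu$ steps, then gives the stated coefficients.

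\emph{Main obstacle.} The substance is the reduction step together with the control of the non-resonant part: making the light-cone identity exact with the correct boundary weights, and arguing that the generalized null space of $\cK$ --- which may carry a nontrivial nilpotent block --- contributes nothing on the stated window. Once those are settled, the Jordan arithmetic, the compatibility of Proposition~\ref{prop:genRS}'s outward recursion with free transport, and the collapse of $\braket{\phi_j^k,\psi_0}$ to a localized finite sum are routine.
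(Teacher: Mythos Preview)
Your approach is essentially the paper's: reduce the evolution to the finite matrix $\cK$ and read off the expansion from its Jordan structure via Proposition~\ref{prop:genRS}. The only cosmetic difference is that the paper splits $\psi_0=\psi_0^{\ope{comp}}+\psi_0^{\ope{in}}+\psi_0^{\ope{out}}$ and feeds the incoming tail into $[n_0]$ layer by layer (each layer becoming, after free transport, a state $\alpha\,\dl_0\ket{R}+\beta\,\dl_{n_0}\ket{L}$ supported on $[n_0]$), whereas you absorb the whole tail in $\nu$ steps at once; these are equivalent bookkeepings.

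On your flagged obstacle: the generalized null space of $\cK$ causes no trouble, and this is worth making explicit (the paper's short proof does not). The image of $\cK$ lies in the codimension-$2$ subspace $V=\{\bm v:\bra{R}\bm v(0)=\bra{L}\bm v(n_0)=0\}$, and under \textbf{(A2)} the polynomial $p$ of Proposition~\ref{prop:BoundAlMul} has leading coefficient $\prod_{n=0}^{n_0} d_n^{-1}\neq0$, so there are exactly $2n_0$ resonances with multiplicity and the nonzero generalized eigenspaces span all of $V$. Since $\psi_\nu\in\cH_0^{\ope{out}}$ forces $\bm v_\nu\in V$, the vector $\bm v_\nu$ has no component in the generalized null space, and the expansion is exact from $t=\nu$ with no transient to discard.
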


This gives us the following decay rate of survival probability:
\begin{corollary}\label{cor:DR}
Let $M:=\max_{\xi\in\ope{Res}(\cU)}\left|e^{-i\xi}\right|$ and $m:=\max_{\xi\in\ope{Res}(\cU)}m_R(\xi)$ $(0\le M<1,\, 1\le m\le n_0)$. For any initial state $\psi_0\in\cH^\ope{out}$, there exists $C>0$ such that 
\ben
\|\psi_t\|_{l^2([n_0];\C^2)}\le Ct^{m-1}M^t,\quad \psi_t=\cU^t\psi_0,\quad t\ge1.
\een
Moreover, for any initial state $\psi_0\in\cH^\ope{out}$, there exist $0\le M_0\le M$, $1\le m_0\le m$ and $C_0>0$ such that
\ben
\|\psi_t\|_{l^2([n_0];\C^2)}\le C_0t^{m_0-1}M_0^t,\quad  t\ge1.
\een
\end{corollary}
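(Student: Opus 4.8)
The plan is to deduce Corollary~\ref{cor:DR} directly from the resonance expansion in Theorem~\ref{thm:REmain}, reading off the worst-case exponential decay and polynomial correction from the resonances of largest modulus. First I would observe that restricting the expansion to $n \in [n_0]$ is legitimate for $t$ large enough: once $t \ge \nu(\psi_0) + n_0$, the index range $-(t-\nu(\psi_0)) \le n \le t + n_0 - \nu(\psi_0)$ contains all of $[n_0]$, so the stated identity for $\psi_t(n)$ holds on the whole perturbed region. For the finitely many small $t$ not covered, the bound is trivially arranged by enlarging the constant $C$. Thus for $t \ge T_0 := \nu(\psi_0)+n_0$ we have the exact finite-sum formula for $\psi_t|_{[n_0]}$.

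Next I would estimate each term. The number of resonances is at most $2n_0$ and each $m_R(\xi_j) \le n_0$ by Proposition~\ref{prop:BoundAlMul}, so the outer and inner sums have a bounded number of terms, independent of $t$. The coefficients $\braket{\phi_j^k,\psi_0}$ and the vectors $\pphi_j^{k-l}(n)$ for $n \in [n_0]$ are fixed complex numbers not depending on $t$; the only $t$-dependence sits in the factor $e^{-it\xi_j}\, e^{il\xi_j}\binom{t}{t-l}$. Since $0 \le l \le k \le m_R(\xi_j) \le m$, the binomial coefficient $\binom{t}{t-l} = \binom{t}{l}$ is a polynomial in $t$ of degree $l \le m-1$, hence $\binom{t}{l} \le C_1 t^{m-1}$ for $t \ge 1$ with $C_1$ depending only on $m$. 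Also $|e^{-it\xi_j}| = |e^{-i\xi_j}|^t \le M^t$ and $|e^{il\xi_j}| = |e^{-i\xi_j}|^{-l} \le M_1^{-m}$ is a bounded constant (here $M_1 := \min_j |e^{-i\xi_j}| > 0$ since $\xi_j \in \Xi$, so $e^{-i\xi_j} \neq 0$ — indeed by Proposition~\ref{prop:EquivRes} condition (3), resonances correspond to \emph{nonzero} eigenvalues of $\cK$). Collecting these bounds over the finitely many $(j,k,l)$ gives $\|\psi_t\|_{l^2([n_0];\C^2)} \le C t^{m-1} M^t$ for $t \ge 1$, as claimed.

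For the refined bound tailored to a particular $\psi_0$, I would set $M_0 := \max\{\,|e^{-i\xi_j}| : \braket{\phi_j^{k},\psi_0} \neq 0 \text{ for some } k\,\}$, i.e. the largest modulus among resonances that actually contribute to $\psi_0$ (with $M_0 := 0$ and the trivial bound if no resonance contributes, in which case $\psi_t|_{[n_0]} = 0$ for $t \ge T_0$). Then $m_0 := \max\{\,m_R(\xi_j) : |e^{-i\xi_j}| = M_0,\ \braket{\phi_j^{k},\psi_0}\neq 0 \text{ for some }k\,\}$. Every surviving term in the expansion is bounded by $C_1 t^{m_0 - 1} M_0^t$ times a constant by exactly the same estimate as above, and terms with $|e^{-i\xi_j}| < M_0$ are absorbed since $|e^{-i\xi_j}|^t t^{m-1} = o(M_0^t t^{m_0-1})$; choosing $C_0$ large enough to also dominate the finitely many $t < T_0$ completes the argument. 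One should note $m_0 \le m$ and $M_0 \le M$ by construction.

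I do not expect a serious obstacle here — the work is entirely bookkeeping built on Theorem~\ref{thm:REmain}. The only point requiring a little care is the one flagged above: making sure the expansion formula is being applied in its valid range of $n$ and $t$, so that one genuinely controls $\|\psi_t\|_{l^2([n_0];\C^2)}$ and not merely a partial sum; this is handled by the elementary observation $T_0 = \nu(\psi_0)+n_0$ and by absorbing small $t$ into the constant. A secondary subtlety is the lower bound on $|e^{-i\xi_j}|$ needed to control the $e^{il\xi_j}$ factors, which is exactly where the ``non-zero eigenvalue'' clause in Proposition~\ref{prop:EquivRes} is used.
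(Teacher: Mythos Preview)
Your argument is correct and is exactly the derivation the paper has in mind: Corollary~\ref{cor:DR} is stated without proof as an immediate consequence of the resonance expansion in Theorem~\ref{thm:REmain}, and your term-by-term estimate is the intended reading. One small slip to tighten: the chain $0\le l\le k\le m_R(\xi_j)\le m$ only gives $l\le m$, not $l\le m-1$; the sharper bound holds because the $l=k$ summand vanishes (Proposition~\ref{prop:genRS} sets $\pphi_j^0=0$), so the surviving terms have $l\le k-1\le m-1$.
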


We will see in Sec.~\refup{Sec:GenMul} that resonances are ``generically" simple. 
Then, the resonance expansion is much simpler:
\begin{corollary}
Assume 
that every resonance is simple. 
Let $\{\xi_j\}_{j=1}^{2J}=\ope{Res}(\cU)$ be the set of resonances, and $\{\psi_j\}_{j=1}^{2J}\subset\cH_0^\ope{out}$ be the corresponding normalized resonant states, i.e., $\|\pphi_j\|_{l^2([n_0];\C^2)}=1$. 
Then there exists the set $\{\phi_j\}_{j=1}^{2J}\subset\cH_0^\ope{in}$ of incoming states such that for any initial state $\psi_0\in\cH^\ope{out}$, 
\ben
\psi_t(n)=(\cU^t\psi_0)(n)=\sum_j e^{-it\xi_j}\braket{\phi_j,\psi_0}\pphi_j(n)
\een
holds for any $-(t-\nu(\psi_0))\le n\le t+n_0-\nu(\psi_0)$. 
In particular, we have the estimates 
\ben
\|\psi_t\|_{l^2([n_0];\C^2)}
\le \sum_j \left|e^{-i\xi_j}\right|^t\left|\braket{\phi_j,\psi_0}\right|
\quad\text{for }t\ge\nu(\psi_0),
\een 
and
\be\label{eq:DEsti2}
\|\psi_t\|_{l^2([n_0];\C^2)}
\le \sum_j \left|e^{-i\xi_j}\right|^t
\left(\sum_{n\in\Z}\left|(\phi_j(n),\psi_0(n))_{\C^2}\right|\right)
\quad\text{for }t\ge0.
\ee
\end{corollary}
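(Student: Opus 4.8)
The final statement is the corollary specializing the resonance expansion (Theorem~\ref{thm:REmain}) to the case where every resonance is simple, i.e.\ $m_R(\xi_j)=1$ for all $j$. The plan is to deduce it directly from Theorem~\ref{thm:REmain} by substituting $m_R(\xi_j)=1$ everywhere and observing that the inner sums collapse.

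First I would specialize the expansion formula. When $m_R(\xi_j)=1$ there is, for each $j$, a single normalized resonant state $\pphi_j=\pphi_j^1\in\cH^\ope{out}_0$ (and no genuine generalized resonant states, since the Jordan chain has length one; in particular $\pphi_j^0=0$). In the first displayed expression in Theorem~\ref{thm:REmain}, the sum over $k$ runs only over $k=1$, and the inner sum over $l$ runs over $l=0,1$; the $l=1$ term vanishes because it contains $\pphi_j^{0}=0$, leaving only the $l=0$ term $\binom{t}{t}e^{0}\pphi_j^{0\ +1}(n)=\pphi_j(n)$. Hence each block reduces to $e^{-it\xi_j}\braket{\phi_j,\psi_0}\pphi_j(n)$, which is exactly the claimed formula with $\phi_j:=\phi_j^1\in\cH^\ope{in}_0$. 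The range of validity $-(t-\nu(\psi_0))\le n\le t+n_0-\nu(\psi_0)$ and the finite-sum formula for the coefficients $\braket{\phi_j,\psi_0}=\sum_{n\in\Z}(\phi_j(n),\psi_0(n))_{\C^2}$ are inherited verbatim.

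Next I would restrict to $[n_0]$ and take norms. Since the range of validity always contains $[n_0]$ once $t\ge\nu(\psi_0)$ (indeed $-(t-\nu(\psi_0))\le 0$ and $n_0\le t+n_0-\nu(\psi_0)$ precisely when $t\ge\nu(\psi_0)$), for such $t$ we may apply the expansion pointwise on $n\in[n_0]$ and use $\|\pphi_j\|_{l^2([n_0];\C^2)}=1$ together with the triangle inequality in $l^2([n_0];\C^2)$ to obtain
\ben
\|\psi_t\|_{l^2([n_0];\C^2)}\le\sum_j|e^{-i\xi_j}|^t\,|\braket{\phi_j,\psi_0}|,\qquad t\ge\nu(\psi_0).
\een
For the second estimate~\eqref{eq:DEsti2}, valid for all $t\ge0$, I would bound $|\braket{\phi_j,\psi_0}|=\big|\sum_{n\in\Z}(\phi_j(n),\psi_0(n))_{\C^2}\big|\le\sum_{n\in\Z}|(\phi_j(n),\psi_0(n))_{\C^2}|$ by the triangle inequality in $\C$; the sum is finite because $\phi_j\in\cH^\ope{in}_0$ has bounded "incoming support" on the far left/right and $\psi_0\in\cH^\ope{out}$ decays there, so only finitely many $n$ contribute (more precisely, the overlap is supported in $-\nu(\psi_0)\le n\le n_0+\nu(\psi_0)$). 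Combining gives \eqref{eq:DEsti2}.

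The only genuine point requiring care — and the step I expect to be the main (minor) obstacle — is the reindexing of the resonance set: Theorem~\ref{thm:REmain} indexes $\ope{Res}(\cU)=\{\xi_j;\,0\le j\le 2J\}$ by $2J+1$ labels, whereas the corollary writes $\{\xi_j\}_{j=1}^{2J}$, consistent with Proposition~\ref{prop:SymRes} (at most $2n_0$ resonances counted with multiplicity, appearing in $\pi$-periodic pairs). Under the simplicity hypothesis the total number of resonances is even, say $2J$, and I would simply relabel accordingly; this is purely cosmetic and does not affect the argument. The rest is a direct substitution plus two applications of the triangle inequality, so no new analysis is needed beyond what Theorem~\ref{thm:REmain} already provides.
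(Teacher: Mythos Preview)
Your specialization of Theorem~\ref{thm:REmain} to $m_R(\xi_j)=1$ is correct: the inner sums collapse exactly as you describe, and the expansion together with the first estimate for $t\ge\nu(\psi_0)$ follow immediately. This is precisely how the paper intends the corollary to be read (it gives no separate proof). Your remark on the $2J$ versus $2J{+}1$ indexing is also right and purely cosmetic.

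There is, however, a real gap in your derivation of \eqref{eq:DEsti2}. You obtain it by bounding $|\braket{\phi_j,\psi_0}|\le\sum_n|(\phi_j(n),\psi_0(n))_{\C^2}|$ and then invoking the first estimate; but the first estimate---and indeed the pointwise expansion on all of $[n_0]$---is only available for $t\ge\nu(\psi_0)$. Your argument therefore does not cover the range $0\le t<\nu(\psi_0)$, which is exactly the regime in which \eqref{eq:DEsti2} is supposed to say something new. To close this, you must go back to the decomposition used in the proof of Theorem~\ref{thm:REmain} (and made explicit in the double-barrier case, Proposition~\ref{prop:ResExpDoub}): write $\psi_0=\psi_0^{\ope{comp}}+\psi_0^{\ope{out}}+\sum_{k\ge1}\psi_0^k$ with $\psi_0^k$ the incoming slice at distance $k$, so that $\nu(\psi_0^k)=k$. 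For $t<k$ the slice $\psi_0^k$ has not yet reached $[n_0]$ and contributes nothing; for $t\ge k$ its contribution on $[n_0]$ is $\sum_j e^{-it\xi_j}\braket{\phi_j,\psi_0^k}\pphi_j$. Summing only the slices with $k\le t$ and applying the triangle inequality gives
\[
\|\psi_t\|_{l^2([n_0];\C^2)}\le\sum_j|e^{-i\xi_j}|^t\sum_{k\le t}|\braket{\phi_j,\psi_0^k}|\le\sum_j|e^{-i\xi_j}|^t\sum_{n\in\Z}|(\phi_j(n),\psi_0(n))_{\C^2}|,
\]
now valid for every $t\ge0$.
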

\begin{remark}
Let $\psi_0\in\cH_w$. If the sequences $(\gamma_{j,l}^k)_{l\in\N}$ $(1\le j\le2J,\,1\le k\le m_R(\xi_j))$ given by
\ben
\gamma_{j,l}^k:=\sum_{n=-l}^{n_0+l}(\phi_j^k(n),\psi_0(n))_{\C^2}
\een
converge to $\gamma_j^k\in\C$ as $l\to+\infty$, we have 
\ben
\Bigl\|\psi_t(n)
-\sum_{j}^{2J} e^{-it\xi_j}\sum_{k=1}^{m_R(\xi_j)}\gamma_j^k
\left(\sum_{l=0}^k\begin{pmatrix}t\\t-l\end{pmatrix}e^{il\xi_j}\pphi_j^{k-l}(n)\right)
\Bigr\|_{\C^2}\to0\quad\text{as }t\to+\infty
\een
for each $n\in\Z$. 
Moreover, Corollary~\refup{cor:DR} is also true, and the estimate \eqref{eq:DEsti2} still holds if the convergence is absolute.
This is satisfied for example when there exist $\dl>0,$ $C>0$ such that
$$
\|\psi_0(n)\|_{\C^2}\le C\left|n\right|^{-m-\dl}\min_{\xi\in\ope{Res}(\cU)}\left|e^{-i\xi}\right|^{|n|}\quad \text{for any}\quad n\in\Z.
$$
\end{remark}

\subsection{Demonstration on double barrier problem}\label{sec:DB}
We first show explicitly the resonance expansion in the simplest case, double barrier problem. We assume \bu{(A3)}:

\noindent
\textbf{(A3)}
\textbf{(A1)} holds with $n_0=1$, and $U_0,U_1$ are not diagonal. 

\begin{remark}
There is no resonance when $U_n$'s are diagonal except at one point. 
There are many other settings which satisfy \bu{(A1--2)} and represent the double barrier. 
However, each of them can be reduced to a quantum walk satisfying \bu{(A3)}.
\end{remark}

\begin{proposition}\label{prop:ResDouble}
Assume \bu{(A2)} and \bu{(A3)}. Then resonances are the two roots $\xi_\pm\in\Xi$ of $e^{2i\xi_\pm}=c_0b_1$ where $\re\xi_+\in[0,\pi),$ $\re\xi_-\in[-\pi,0)$. The $($not normalized$)$ resonant state $\pphi^\pm$ associated with $\xi_\pm$ is given by 
\ben
\psi^\pm(n)=
\left\{
\begin{aligned}
&\pm b_1(\pm\lambda)^n\ket{L}\quad&n\le0,\\
&\pm(\pm\lambda)^{2-n}\ket{R}\quad&n\ge1,
\end{aligned}\right.
\een
where we denote by $e^{i\xi_\pm}=\pm\sqrt{c_0b_1}=\pm\lambda$. 
Here, we chose the branch of the square root of $c_0b_1\in\C\setminus[0,+\infty)$ such that the imaginary part is positive. 
\end{proposition}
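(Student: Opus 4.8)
The plan is to read off the resonances from the quantization condition $\T_{22}(\xi)=0$ of Proposition~\ref{prop:EquivRes}, and then to exhibit the resonant states by solving \eqref{eq:EEq} directly, appealing to Proposition~\ref{prop:Gmul} for uniqueness. First, since $n_0=1$ only $T_0,T_1$ are non-trivial, so $\T=T_0T_1$; multiplying these two $2\times2$ matrices out and reading off the $(2,2)$-entry gives $\T_{22}(\xi)=\frac{c_0\bar c_1}{d_0\bar a_1}+\frac{e^{-2i\xi}}{d_0d_1}$. Because $U_1$ is unitary, $U_1^{-1}=U_1^{*}$ yields $\bar c_1/\bar a_1=-b_1/d_1$, whence $\T_{22}(\xi)=(d_0d_1)^{-1}\bigl(e^{-2i\xi}-c_0b_1\bigr)$. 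Here $d_0,d_1\neq0$: under \textbf{(A3)} the coins $U_0,U_1$ are not diagonal, so $b_j\neq0$ and $c_j\neq0$, and a $2\times2$ unitary satisfies $|a_j|=|d_j|$, so \textbf{(A2)} forces $d_j\neq0$. By the equivalence $(1)\Leftrightarrow(4)$ in Proposition~\ref{prop:EquivRes}, $\xi\in\Xi$ is therefore a resonance if and only if $e^{-2i\xi}=c_0b_1$.

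Next, the map $\xi\mapsto e^{-2i\xi}$ is two-to-one from $\Xi=[-\pi,\pi)+i\R$ onto $\C\setminus\{0\}$, two preimages of a value differing by $\pi$ in real part; since $c_0b_1\neq0$ there are exactly two roots $\xi_\pm$, one with $\re\xi_+\in[0,\pi)$ and the other with $\re\xi_-=\re\xi_+-\pi\in[-\pi,0)$, and in particular $\xi_+\neq\xi_-$. They lie strictly in the lower half-plane, because $|e^{-2i\xi_\pm}|=|c_0|\,|b_1|<1$ (as $0<|c_0|,|b_1|<1$ by \textbf{(A2)}--\textbf{(A3)}) forces $\im\xi_\pm<0$; equivalently, $\Sigma$ has no pole in $\Xi_+$ by Theorem~\ref{thm:holSmrx}. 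Taking $\lambda$ to be the square root of $c_0b_1$ with $\im\lambda>0$ (available when $c_0b_1\notin[0,+\infty)$), the two roots are recorded by $e^{-i\xi_\pm}=\pm\lambda$. Finally, $\T_{22}$ is affine and non-constant in $e^{-2i\xi}$, so each root is a simple zero, i.e.\ $m_R(\xi_\pm)=1$.

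For the resonant states, Proposition~\ref{prop:EquivRes} says that a resonant state for $\xi_\pm$ is, up to a scalar, a non-zero outgoing $\psi\in\cH_w$ with $\cU\psi=e^{-i\xi_\pm}\psi$, and by Proposition~\ref{prop:Gmul} the space of these is one-dimensional; so it suffices to produce one such $\psi$ and fix a normalization. Writing $\psi_L(n):=\bra{L}\psi(n)$ and $\psi_R(n):=\bra{R}\psi(n)$, equation \eqref{eq:EEq} is the pair of scalar relations $a_{n+1}\psi_L(n+1)+b_{n+1}\psi_R(n+1)=e^{-i\xi}\psi_L(n)$ and $c_{n-1}\psi_L(n-1)+d_{n-1}\psi_R(n-1)=e^{-i\xi}\psi_R(n)$ for $n\in\Z$. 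The outgoing ansatz imposes $\psi_R(n)=0$ for $n\le0$ and $\psi_L(n)=0$ for $n\ge1$; with these, on the tails $n\le-1$ and $n\ge2$ the coins are $I_2$ and $\psi$ is geometric there with the ratios forced by the free relations, while the finitely many relations involving $U_0,U_1$ determine $\psi$ near $n=0,1$ up to the single scalar $\psi_L(0)$ --- one of these relations being precisely the consistency condition that reduces to $e^{-2i\xi}=c_0b_1$. The resulting $\psi$ is visibly in $\cH_w$ and outgoing (from its support pattern $\bra{R}\psi(-n)=\bra{L}\psi(n_0+n)=0$ for all $n\ge0$); reading off its values and normalizing gives the displayed state $\psi^\pm$.

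The argument is essentially bookkeeping, with no conceptual difficulty. The two places to handle with care are, in the first step, the unitarity simplification of $\T_{22}$ (getting $\bar c_1/\bar a_1=-b_1/d_1$ and the non-degeneracy $d_0,d_1\neq0$ right), and, in the last step, the site-by-site verification of the two-component stationary equation near $n=0,1$: this is exactly where the quantization condition $e^{-2i\xi_\pm}=c_0b_1$ is consumed, and where the signs distinguishing $\xi_+$ from $\xi_-$ and the normalization of $\psi^\pm$ must be tracked consistently.
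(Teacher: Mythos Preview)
Your argument is correct. The paper's own proof is a single line: it simply writes down the $4\times4$ representation matrix of $\cK$ on $\C^{2(n_0+1)}=\C^4$ and says the statement follows by computing its eigenvalues and eigenvectors (this is the characterization~(3) in Proposition~\ref{prop:EquivRes}). You instead use characterization~(4), computing $\T=T_0T_1$ and reading off $\T_{22}$, and then construct the outgoing solution of \eqref{eq:EEq} by hand.

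The two routes are equivalent via Proposition~\ref{prop:EquivRes} and both amount to a short linear-algebra calculation. Your transfer-matrix approach has the mild advantage that it is exactly the method the paper later uses for general $n_0$ (Proposition~\ref{prop:BoundAlMul}, Section~\ref{Sec:GenMul}), and it makes the simplicity $m_R(\xi_\pm)=1$ immediate since $\T_{22}$ is visibly affine in $e^{-2i\xi}$. The paper's $\cK$-diagonalization, on the other hand, hands you the restriction $\pphi^\pm|_{[n_0]}$ directly as an eigenvector, so the resonant state is obtained without re-solving \eqref{eq:EEq}; the extension to $\Z$ is then just the free propagation encoded in Proposition~\ref{prop:genRS}. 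Your added checks ($d_0,d_1\neq0$ from unitarity plus \textbf{(A2)}, and $\im\xi_\pm<0$ from $|c_0b_1|<1$) are details the paper leaves implicit.
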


The above Proposition~\ref{prop:ResDouble} is obtained by computing the eigenvalues and the eigenvectors of the representation matrix of the restricted operator $\cK$ given by
$$
\cK=
\begin{pmatrix}
0&0&a_1&b_1\\
0&0&0&0\\
0&0&0&0\\
c_0&d_0&0&0
\end{pmatrix}.
$$

\begin{proposition}\label{prop:ResExpDoub}
For any initial state $\psi_0\in\cH^\ope{out}$, we have
\be\label{eq:ResExpDoub}
\psi_t(n)=(\cU^t\psi_0)(n)=\sum_{\pm}(\pm\lambda)^t\braket{\phi_\pm,\psi_0}\psi^\pm(n)
\ee
for any $-t+\nu(\psi_0)\le n\le t+1-\nu(\psi_0)$,  where $\phi_\pm\in\cH_0^\ope{in}$ are given by 
\ben
2(\phi_\pm(n))^*=\left\{
\begin{aligned}
&\pm\lambda^{-2}\bra{R}Q_0\quad&(n=0),\\
&b_1^{-1}\lambda^{-1}\bra{L}P_1\quad&(n=1),\\
&\pm d_0(\pm\lambda)^{-|n|+2}\bra{R}\quad&(n<0),\\
&\pm a_1b_1^{-1}(\pm\lambda)^{-n}\bra{L}\quad&(n>1).
\end{aligned}
\right.
\een
In particular, there exists $C>0$ such that 
\be\label{eq:ResDecayDoub}
\|\psi_t\|_{l^2([1];\C^2)}\le C\left|\lambda\right|^t,
\ee
holds for any $t\in\N$. 
The constant $C>0$ is given by $C^2=|b_1|(|b_1|+|c_0|)\sum_{\pm}\left|\gamma_\pm\right|^2$. 
\end{proposition}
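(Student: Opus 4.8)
The plan is to reduce the statement to the finite‑dimensional evolution of $\psi_t$ on the perturbed block $[n_0]=\{0,1\}$, which is governed by the $4\times 4$ matrix $\cK$, and then to propagate the resulting identity out to the whole light cone. Set $\nu:=\nu(\psi_0)$ and $\bm w_t:=\psi_t|_{[n_0]}\in\C^{2(n_0+1)}$. Comparing the identities $(\cU\psi_t)(0)=P_1\psi_t(1)+\ket{R}\bra{R}\psi_t(-1)$ and $(\cU\psi_t)(n_0)=Q_{n_0-1}\psi_t(n_0-1)+\ket{L}\bra{L}\psi_t(n_0+1)$ with the definition \eqref{eq:defK} of $\cK$, one gets the recursion $\bm w_{t+1}=\cK\bm w_t+\bm g_t$, where $\bm g_t\in\C^{2(n_0+1)}$ is supported on $\{0,n_0\}$ with $\bm g_t(0)=\ket{R}\bra{R}\psi_t(-1)$ and $\bm g_t(n_0)=\ket{L}\bra{L}\psi_t(n_0+1)$ --- the flux entering the window at step $t$. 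Since the walk is free outside $[n_0]$, tracing $\psi_t$ backwards along the light rays gives $\bra{R}\psi_t(-1)=\bra{R}\psi_0(-1-t)$ and $\bra{L}\psi_t(n_0+1)=\bra{L}\psi_0(n_0+1+t)$, and these vanish for $t\ge\nu-1$ because $\psi_0\in\cH^{\ope{out}}_{\nu}$; hence $\bm g_t=0$ for $t\ge\nu-1$. Iterating, $\bm w_t=\cK^{t}\bm w_0+\sum_{s\ge0}\cK^{\,t-1-s}\bm g_s$ for $t\ge1$, a sum with only finitely many nonzero terms.

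Next I would diagonalize $\cK$. By Proposition~\ref{prop:ResDouble} (or directly from the displayed matrix) $\cK$ is diagonalizable, with simple eigenvalues $\pm\lambda$ and eigenvectors $\psi^\pm|_{[n_0]}$, and with the eigenvalue $0$ of multiplicity $2$. Thus every $\bm w\in\C^{2(n_0+1)}$ decomposes as $\bm w=\alpha_+(\bm w)\,\psi^+|_{[n_0]}+\alpha_-(\bm w)\,\psi^-|_{[n_0]}+(\text{an element of }\ker\cK)$ with uniquely determined linear functionals $\alpha_\pm$, $\cK^k\bm w=\sum_\pm(\pm\lambda)^k\alpha_\pm(\bm w)\,\psi^\pm|_{[n_0]}$ for every $k\ge1$, and in particular $\alpha_\pm(\cK\bm w)=(\pm\lambda)\alpha_\pm(\bm w)$. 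Applying this to $\bm w_t=\cK^{t}\bm w_0+\sum_{s\ge0}\cK^{\,t-1-s}\bm g_s$ (noting $\bm w_t\in\operatorname{span}(\psi^\pm|_{[n_0]})$ for $t\ge1$), one obtains, for $t\ge\nu$,
\[
\psi_t|_{[n_0]}=\sum_\pm(\pm\lambda)^t c_\pm\,\psi^\pm|_{[n_0]},\qquad c_\pm=\alpha_\pm(\bm w_0)+\sum_{s\ge0}(\pm\lambda)^{-1-s}\,\alpha_\pm(\bm g_s).
\]
Since $\bm g_s$ depends on $\psi_0$ only through $\bra{R}\psi_0(-1-s)$ and $\bra{L}\psi_0(n_0+1+s)$, the functional $c_\pm$ is a finite sum of the form $\sum_n(\phi_\pm(n),\psi_0(n))_{\C^2}$ with $\phi_\pm\in\cH^{\ope{in}}_0$; a direct computation of $\alpha_\pm$ from the entries of $\cK$ identifies $\phi_\pm$ with the vectors in the statement (the values at $0$ and $n_0$ coming from $\alpha_\pm$ on $[n_0]$, the rest from the geometric weights $(\pm\lambda)^{-1-s}$), and gives $c_\pm=\braket{\phi_\pm,\psi_0}$. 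This proves \eqref{eq:ResExpDoub} for $n\in[n_0]$.

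To pass from $[n_0]$ to the light cone $-t+\nu\le n\le t+n_0-\nu$, put $\tilde\psi_t:=\sum_\pm(\pm\lambda)^t\braket{\phi_\pm,\psi_0}\psi^\pm$ and $\rho_t:=\psi_t-\tilde\psi_t$. Each $\psi^\pm$ is a resonant state with $\cU\psi^\pm=(\pm\lambda)\psi^\pm$, so $\cU\tilde\psi_t=\tilde\psi_{t+1}$ and hence $\rho_{t+1}=\cU\rho_t$ in $\cH_w$; moreover $\rho_t|_{[n_0]}=0$ for $t\ge\nu$ by the previous step. Tracing the free evolution once more, and using both $\psi_0\in\cH^{\ope{out}}_\nu$ and the fact that the $\psi^\pm$ are themselves outgoing, one checks that $\rho_t(n)$ is $\ket{L}$‑valued for all $n\le-1$ and $\ket{R}$‑valued for all $n\ge n_0+1$ once $t\ge\nu$. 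Feeding these two facts into the nearest‑neighbour recursion $\rho_{t+1}=\cU\rho_t$ and inducting on $t$ then propagates the vanishing of $\rho_t$ one lattice site outward at each end of the light cone, giving $\rho_t\equiv0$ on $[-t+\nu,\,t+n_0-\nu]$ for every $t\ge\nu$, which is \eqref{eq:ResExpDoub}. Finally, \eqref{eq:ResDecayDoub} follows by taking the $l^2([n_0];\C^2)$‑norm of the finite sum $\psi_t|_{[n_0]}=\sum_\pm(\pm\lambda)^t\braket{\phi_\pm,\psi_0}\psi^\pm|_{[n_0]}$, together with $\|\psi^\pm|_{[n_0]}\|^2=|b_1|^2+|\lambda|^2=|b_1|(|b_1|+|c_0|)$ (recall $|\lambda|^2=|c_0 b_1|$), which gives the stated form of the constant.

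I expect the main obstacle to be bookkeeping rather than anything conceptual: the recursion for $\bm w_t$ and the diagonalization of $\cK$ are routine once set up, but one still has to (i) verify carefully that the abstract spectral coefficient $c_\pm$ coincides with the pairing of $\psi_0$ against the explicitly written $\phi_\pm$, and that $\phi_\pm\in\cH^{\ope{in}}_0$, which is a somewhat intricate computation with the coin entries; and (ii) run the outward induction on the precise ranges $-t+\nu\le n\le t+n_0-\nu$, making sure the incoming part of $\psi_0$ has already left the observation window by time $t=\nu$ so that $\rho_t$ is genuinely outgoing there.
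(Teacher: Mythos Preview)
Your approach is correct and rests on the same core computation as the paper's---the diagonalization of $\cK$---but the surrounding organization is different. The paper decomposes the initial state as $\psi_0=\psi_0^{\ope{comp}}+\psi_0^{\ope{in}}+\psi_0^{\ope{out}}$, treats the outgoing part trivially, handles the compactly supported part by applying the explicit formula for $\cK^t$, and reduces the incoming part to a sum of time-shifted compactly supported pieces $\psi_0^k$ with $\cU^k\psi_0^k$ supported on $[n_0]$; the range in $n$ then emerges piece by piece. Your Duhamel recursion $\bm w_{t+1}=\cK\bm w_t+\bm g_t$ packages the compact and incoming contributions into a single formula, and your argument via $\rho_t=\psi_t-\tilde\psi_t$ (outgoing, vanishing on $[n_0]$, hence expanding one site per step) is a cleaner way to obtain the light-cone range than tracking each piece separately. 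The paper's route has the advantage of producing the explicit $\phi_\pm$ directly from the $\cK^t$ formula and the shifts, while in your scheme that identification is the one genuinely nontrivial verification left (your point (i)); but both arguments are equivalent once the dual eigenvectors of $\cK$ are written down. Your remark that the constant in \eqref{eq:ResDecayDoub} follows from $\|\psi^\pm|_{[1]}\|^2=|b_1|(|b_1|+|c_0|)$ matches the paper's, though note that $\psi^+|_{[1]}$ and $\psi^-|_{[1]}$ are not orthogonal in general, so the precise form $C^2=|b_1|(|b_1|+|c_0|)\sum_\pm|\gamma_\pm|^2$ deserves a word of justification beyond the triangle inequality.
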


\begin{proof}
If the resonance expansion \eqref{eq:ResExpDoub} is true, then the estimate \eqref{eq:ResDecayDoub} is clear since 
$$
\|\psi^\pm\|_{l^2([1];\C^2)}^2=\left|b_1\right|^2+\left|\lambda\right|^2=\left|b_1\right|(\left|b_1\right|+\left|c_0\right|).
$$
Let us prove the expansion. 
By the singular value decomposition and the diagonalization, we obtain 
\begin{align*}
\cK^t&=
\begin{pmatrix}
b_1&-b_1\\0&0\\0&0\\\lambda&\lambda
\end{pmatrix}
\begin{pmatrix}
\lambda^t&0\\
0&(-\lambda)^t
\end{pmatrix}
\left(
\frac{1}{2\lambda^2}
\begin{pmatrix}
c_0&d_0&a_1b_1^{-1}\lambda&\lambda\\
-c_0&-d_0&a_1b_1^{-1}\lambda&\lambda
\end{pmatrix}
\right)\\
&=\sum_{\pm}(\pm\lambda)^t
\chi_{[1]}(\psi^\pm)
(2\lambda^2)^{-1}
\begin{pmatrix}
\pm c_0&\pm d_0& a_1b_1^{-1}\lambda&\lambda
\end{pmatrix}.
\end{align*}
Here, $\chi_{[n_0]}:\cH_w\to\C^{2(n_0+1)}$ is the restriction operator. 
We decompose the initial state $\psi_0$ into three parts
\ben
\psi_0=\psi_0^\ope{comp}+\psi_0^\ope{in}+\psi_0^\ope{out},
\een
with 
\begin{align*}
\begin{aligned}
&\psi_0^\ope{comp}(0)=\psi(0),\quad\psi_0^\ope{comp}(1)=\psi_0(1),\quad
\psi_0^\ope{comp}(n)=\bm{0}\quad(n\neq0,1),\\ 
&\psi_0^\ope{in}(n)=\bra{R}\psi_0(n)\ket{R}\quad(n\le-1),\quad
\psi_0^\ope{in}(n)=\bra{L}\psi_0(n)\ket{L}\quad(n\ge2),\\
&\psi_0^\ope{out}(n)=\bra{L}\psi_0(n)\ket{L}\quad(n\le-1),\ \ 
\psi_0^\ope{out}(n)=\bra{R}\psi_0(n)\ket{R}\quad(n\ge2).
\end{aligned}
\end{align*}
Then the time evolution of $\psi_0^\ope{out}$ is obvious. For each $t$, we have $(\cU^t\psi_0^\ope{out})(n)=0$ for any $-t\le n\le1+t$. We next treat $\psi_t^\ope{comp}=\cU^t\psi_0^\ope{comp}$. Since the restriction $\chi_{[1]}(\psi_t^\ope{comp})$ is given by $\cK^t\chi_{[1]}(\psi_0^\ope{comp})$, we have 
\ben
\chi_{[1]}(\psi_t^\ope{comp})
=\sum_\pm \gamma_\pm^\ope{comp}(\pm\lambda)^t \chi_{[1]}(\psi^\pm)
\een
with 
\ben
\gamma_\pm^\ope{comp}=\frac{\pm1}{2\lambda^2}\bra{R}Q_0\psi_0^\ope{comp}(0)
+\frac{1}{2b_1\lambda}\bra{L}P_1\psi_0^\ope{comp}(1).
\een
It follows that for each $t\in\N$,
\ben
\psi_t^\ope{comp}(n)=\sum_\pm \gamma_\pm^\ope{comp}(\pm\lambda)^t \psi^\pm\quad -t+1\le n\le t.
\een
To treat $\psi_0^\ope{in}$, we consider the time evolution of 
\ben
\psi^{\alpha,\beta}(n)=\alpha\dl_0(n)\ket{R}+\beta\dl_1(n)\ket{L}\in\cH_c\quad \alpha,\beta\in\C.
\een 
Since this is supported only on $[1]$, the time evolution is given by
\ben
(\cU^t\psi^{\alpha,\beta})(n)=\sum_\pm \gamma_\pm^{\alpha,\beta}(\pm\lambda)^t \psi^\pm\quad -t+1\le n\le t
\een
with 
\ben
\gamma_\pm^{\alpha,\beta}=\frac{\pm1}{2\lambda^2}\bra{R}Q_0\psi^{\alpha,\beta}(0)
+\frac{1}{2b_1\lambda}\bra{L}P_1\psi^{\alpha,\beta}(1)
=\frac{\pm d_0}{2\lambda^2}\alpha+\frac{a_1}{2b_1\lambda}\beta.
\een
Note that by setting $\psi_0^k(n):=\bm{1}_{\{-k,1+k\}}(n)\psi_0^\ope{in}(n)$, we have $\cU^k\psi_0^k=\psi^{\alpha_k,\beta_k}$ for $\alpha_k=\bra{R}\psi_0(-k)$, $\beta_k=\bra{L}\psi_0(1+k)$, and consequently, we have 
\ben
\psi_j^k(n)=\sum_\pm \gamma_\pm^{\alpha_k,\beta_k}(\pm\lambda)^{t-k} \psi^\pm\quad -t+k+1\le n\le t-k
\een 
for $t>k$. We obtain the resonance expansion \eqref{eq:ResExpDoub} by the linearity of the time evolution. 
\end{proof}

\subsection{Proof of Theorem~\refup{thm:REmain}}
We then give a proof for the theorems on resonance expansions. The argument is essentially the same as above.
We decompose the initial state $\psi_0$ into three parts
\ben
\psi_0=\psi_0^\ope{comp}+\psi_0^\ope{in}+\psi_0^\ope{out},
\een
with 
\begin{align*}
\begin{aligned}
&\psi_0^\ope{comp}(n)=\psi(n)\quad(n\in[n_0]),\quad
\psi_0^\ope{comp}(n)=\bm{0}\quad(n\notin[n_0]),\\ 
&\psi_0^\ope{in}(n)=\bra{R}\psi_0(n)\ket{R}\quad(n\le-1),\quad
\psi_0^\ope{in}(n)=\bra{L}\psi_0(n)\ket{L}\quad(n\ge n_0+1),\\
&\psi_0^\ope{out}(n)=\bra{L}\psi_0(n)\ket{L}\quad(n\le-1),\ \ 
\psi_0^\ope{out}(n)=\bra{R}\psi_0(n)\ket{R}\quad(n\ge n_0+1).
\end{aligned}
\end{align*}
Then the time evolution of $\psi_0^\ope{out}$ is obvious. 
The restriction of $\psi_0^\ope{comp}$ to $[n_0]$ can be written  uniquely by the linear combination of the generalized eigenvectors of $\cK$ which correspond to the generalized resonant states $\pphi_j^k$. Then we obtain the expansion of the time evolution of $\psi_0^\ope{comp}$. 
For the incoming part $\psi_0^\ope{in}$, by a similar argument as the case of double barrier, it is reduced to the time evolution of 
$$
\psi^{\alpha,\beta}(n)=\alpha\dl_0(n)\ket{R}+\beta\dl_{n_0}(n)\ket{L}\in\cH_c\quad \alpha,\beta\in\C.
$$
This is done similarly as for $\psi_0^\ope{comp}$ since $\psi^{\alpha,\beta}$ is only supported on $[n_0]$. 

\begin{figure}
\centering
\includegraphics[bb=0 0 779 105, width=13.5cm]{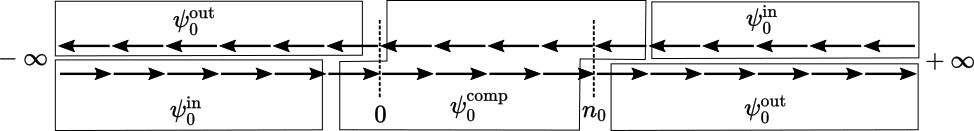}
\caption{Decomposition of $\psi_0$}
\label{Fig2}
\end{figure}

\section{Algebraic multiplicity of resonances}\label{Sec:GenMul}
In the previous section, we have seen that the resonance expansion is much simpler when every resonance is simple, the algebraic multiplicity is one. We have also seen that resonances are simple in the double barrier problem.  
In this section, we first give an example of multiple resonances in the triple barrier problem. 
After that, we show the generic simplicity which tells us that multiple resonances are special.

\subsection{An example for multiple resonances}
Let us consider the triple barrier problem.

\vskip2mm
\noindent
\textbf{(A4)} \textbf{(A1)} holds with $n_0=2$, and $U_0$, $U_1$, $U_2$ are not diagonal. 

\begin{proposition}
Assume \bu{(A2)} and \bu{(A4)}. Then the two roots $\xi_\pm\in\Xi\setminus\Xi_+$ of $e^{-2i\xi}=(c_1b_2+c_0b_1)/2$ are resonances whose algebraic multiplicity is two, if and only if 
\be\label{eq:multicond}
(c_1b_2+c_0b_1)^2+4c_0b_2\det U_1=0.
\ee
\end{proposition}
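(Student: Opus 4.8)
The plan is to make the quantization polynomial $p$ from Proposition~\ref{prop:BoundAlMul} explicit in the case $n_0=2$, and then characterize when it has a double root. Recall that $\T = T_0 T_1 T_2$ and resonances correspond to zeros of $\T_{22}$; by the induction in Proposition~\ref{prop:BoundAlMul}, $e^{-3i\xi}\T_{22} = e^{-2i\xi}p(e^{-2i\xi})$ with $\deg p = 2$. So I would first compute the $(2,2)$-entry of the product $T_0 T_1 T_2$ directly from the formula
\ben
T_n(\xi)=
\begin{pmatrix}
e^{i\xi}/\bar{a}_n&-\bar{c}_n/\bar{a}_n\\
-c_n/d_n&e^{-i\xi}/d_n
\end{pmatrix},
\een
keeping track of the powers of $e^{i\xi}$. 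Setting $z=e^{-2i\xi}$, the polynomial $p(z)$ will be a quadratic $p(z) = \alpha z^2 + \beta z + \gamma$ whose coefficients are rational expressions in $a_n,b_n,c_n,d_n$ (using $\bar a_n = (\det U_n)/d_n$ etc., together with $|a_n|^2+|b_n|^2=1$ and the unitarity relations, to simplify). The claim that $e^{-2i\xi} = (c_1 b_2 + c_0 b_1)/2$ is a double root should translate into: the quadratic $p$ has its unique root at $z_0 = (c_1b_2+c_0b_1)/2$, i.e. $p(z) = \alpha(z - z_0)^2$, which is equivalent to the single scalar equation $\beta^2 = 4\alpha\gamma$ together with $z_0 = -\beta/(2\alpha)$.

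**Identifying the discriminant condition.** The heart of the argument is to show that the discriminant condition $\beta^2 - 4\alpha\gamma = 0$ is exactly \eqref{eq:multicond}, namely $(c_1b_2+c_0b_1)^2 + 4 c_0 b_2 \det U_1 = 0$, and that when this holds the double root sits at $(c_1b_2+c_0b_1)/2$. After the explicit computation of $T_0T_1T_2$, one of the coefficients — presumably the leading or constant one — should factor through $\det U_1$ (the "middle" coin), since the product $T_0 T_1 T_2$ couples $U_0$ and $U_2$ only through $U_1$'s entries. I expect that after clearing denominators the condition $\beta^2=4\alpha\gamma$ becomes proportional to $(c_1b_2+c_0b_1)^2 + 4c_0b_2\det U_1$, with a nonvanishing proportionality factor (some product of the $a_n$'s, nonzero by \textbf{(A2)}, and the $d_n$'s, nonzero since the coins are non-diagonal by \textbf{(A4)}, so $b_n,c_n\neq 0$ hence $|a_n|,|d_n|<1$ but also $d_n\neq 0$ because $a_n\neq 0$ forces $\det U_n\neq 0$). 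Then, by Proposition~\ref{prop:SymRes}, the roots of $p$ in the variable $z=e^{-2i\xi}$ correspond to pairs $\{\xi,\xi+\pi\}$ of resonances; a double root $z_0$ thus yields two resonances $\xi_\pm$ with $\re\xi_+\in[0,\pi)$, $\re\xi_-\in[-\pi,0)$, each of algebraic multiplicity two, since the multiplicity of $\xi$ as a zero of $\T_{22}$ equals the multiplicity of $e^{-2i\xi}$ as a zero of $p$ (Proposition~\ref{prop:BoundAlMul}). Finally one checks $\re\xi_\pm\notin\Xi_+$: since $|z_0|=|e^{-2i\xi_\pm}|$, a root on or outside the unit circle would by Proposition~\ref{prop:EquivRes} and Lemma~\ref{lem:EquivCond} force an eigenvalue of $\cK$ of modulus $\geq 1$; under \textbf{(A2)} the only such possibility is modulus exactly $1$, and one rules this out — or simply notes that Theorem~\ref{thm:holSmrx} already places all poles in $\Xi\setminus\Xi_+$ — so $\im\xi_\pm<0$.

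**Main obstacle.** The only real work is the bookkeeping in expanding $T_0T_1T_2$ and recognizing that the discriminant collapses to \eqref{eq:multicond}; the unitarity identities $|a_n|^2+|b_n|^2=|c_n|^2+|d_n|^2=1$, $a_n\bar c_n + b_n\bar d_n=0$, and $\bar a_n d_n - \bar c_n b_n = \det U_n$ (with $|\det U_n|=1$) will be needed to get the clean form with $\det U_1$ appearing. I'd organize this by writing $T_0 T_1 = \begin{pmatrix} * & * \\ * & * \end{pmatrix}$ first, extracting the bottom row, then dotting with the right column of $T_2$; the bottom row of $T_0T_1$ should already only involve $c_0,d_0$ and the entries of $T_1$, which keeps the expressions manageable. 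The potential trap is a sign or conjugation error in converting between $z=e^{-2i\xi}$ and $e^{2i\xi}$, and in matching the normalization constant $\tfrac12$ in $z_0 = (c_1b_2+c_0b_1)/2$ — this forces $\beta = -2\alpha z_0$, which is a genuine constraint to verify, not a normalization one is free to choose, so both $\beta^2=4\alpha\gamma$ and $\beta=-2\alpha z_0$ must be shown to follow from \eqref{eq:multicond} (and conversely).
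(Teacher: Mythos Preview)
Your approach is exactly the paper's: compute $\T_{22}$ explicitly for $n_0=2$, obtain a quadratic in $z=e^{-2i\xi}$, and identify the discriminant condition with \eqref{eq:multicond}. The paper's calculation yields the monic form $e^{-i\xi}d_0d_1d_2\,\T_{22}=z^2-(c_1b_2+c_0b_1)z-c_0b_2\det U_1$, so $\alpha=1$, $\beta=-(c_1b_2+c_0b_1)$, $\gamma=-c_0b_2\det U_1$; hence the double-root location $-\beta/(2\alpha)=(c_1b_2+c_0b_1)/2$ is automatic once the coefficients are in hand, and your worry about having to verify $\beta=-2\alpha z_0$ separately dissolves.
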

\begin{proof}
By a straight calculation, we have
\ben
e^{-i\xi}d_0d_1d_2\T_{22}=(e^{-2i\xi})^2-(c_1b_2+c_0b_1)e^{-2i\xi}-c_0b_2\det U_1.
\een
This is a polynomial of degree $2$ of $e^{-2i\xi}$. This has a multiple root if and only if \eqref{eq:multicond} holds.
\end{proof}

The condition \eqref{eq:multicond} is complicated. A necessary condition is given by
\ben
\left|b_1\right|=\frac{2\sqrt{|c_0b_2|}}{|c_0+b_2(c_1/b_1)|}
\ge\frac{2\sqrt{r_0r_2}}{r_0+r_2},\quad
r_n:=\left|b_n\right|=\left|c_n\right|.
\een
The assumption \bu{(A2)} implies $\left|b_1\right|<1$, and  the right hand side is equal to one when $r_0=r_2$. Thus, $r_0\neq r_2$ is required, where 
Let us restrict ourselves to the case that $a_n=d_n=\sqrt{1-r_n^2}$, $b_n=-c_n=r_n$. Then the condition \eqref{eq:multicond} turns into 
\ben
r_1=\frac{2\sqrt{r_0r_2}}{r_0+r_2}.
\een
Then for any $r_0\neq r_2$, there uniquely exists $0<r_1<1$ which satisfies this condtion.  
We give an example:
\ben
r_0=\frac{3}{4},\quad r_1=\frac{12}{13},\quad r_2=\frac{1}{3}.
\een

\subsection{Generic simplicity}
To discuss the ``generic" property, we have to introduce a topology to a class of quantum walks. 
Let $\mc{S}$ and $\mc{T}$ be the sets of $2\times2$ matrices given by
\ben
\begin{aligned}
&\mc{T}=\{e^{i\theta}T;\,T\in\ope{SL}(2),\ T_{11}=\bar{T}_{22},\ T_{21}=\bar{T}_{12},\ \theta\in[0,\pi)\},\\
&\mc{S}=\{S\in\ope{U}(2);\,\ S_{11}\neq0\}.
\end{aligned}
\een
Note that each coin matrix $U_n$ and the scattering matrix for real $\xi$ belong to $\mc{S}$, and that local and global transfer matrices $T_n,\T$ for real $\xi$ belong to $\mc{T}$. For each element $T\in\mc{T}$, $S\in\mc{S}$, there uniquely exists $(p,q,\theta)\in \{(z,w)\in\C^2;|z|^2-|w|^2=1\}\times(\R/2\pi\Z)$ such that $\theta\in[0,\pi)+2\pi\Z$ and
\ben
T=T_{p,q,\theta}:=
e^{i\theta}
\begin{pmatrix}
p&\bar{q}\\q&\bar{p}
\end{pmatrix},
\quad
S=S_{p,q,\theta}:=
\bar{p}^{-1}
\begin{pmatrix}
e^{i\theta}&\bar{q}\\-q&e^{-i\theta}
\end{pmatrix}.
\een
Here, we define an equivalence relation $(p,q,\theta)\sim(-p,-q,\theta-\pi)$ since we have $T_{p,q,\theta}=T_{-p,-q,\theta-\pi}$ and $S_{p,q,\theta}=S_{-p,-q,\theta-\pi}$. 
Then $\mc{T}$ and $\mc{S}$ are topological spaces induced by the topology of $\{(z,w)\in\C^2;|z|^2-|w|^2=1\}\times(\R/2\pi\Z)$ divided by $\sim$, the product of a subset of $\C^2$ and the torus $\R/2\pi\Z$. 
The topological space $\mc{T}$ forms a topological group with respect to the usual product of matrices, and $\mc{S}$ forms a topological group with respect to the product $*$ and the topology induced by the bijection $\mc{M}:\mc{T}\to\mc{S}$ given by $\mc{M}(T_{p,q,\theta}):=S_{p,q,\theta}$. 
We denote the operator $\cU$ determined by the sequence $(U_n)_{n\in\Z}$ by $\cU((U_n)_{n\in\Z})$, and introduce 
$$
\mc{S}([n_0]):=\{\cU((U_n)_{n\in\Z});\,U_n\in\mc{S}\text{ for }n\in[n_0],\  U_n=I_2\text{ for }n\notin[n_0]\}\cong \mc{S}^{n_0+1}.
$$
Remark that we have the inclusion $\mc{S}([n_0])\subset\mc{S}([n_0+1])$ for any $n_0\in\N$, and $\mc{S}([n_0])$ is closed in $\mc{S}([n_0+1])$. Then the topology for the space $\varinjlim_{n_0\in\N}\mc{S}([n_0])$ of finitely perturbed quantum walks satisfying \bu{(A1--A2)} is determined.

\begin{remark}
For each element $S\in\mc{S}$, we have
$$
S=\begin{pmatrix}a&b\\c&d\end{pmatrix}=S_{p,q,\theta},
\quad
(ad\neq0,\ c=-\bar{d}^{-1}a\bar{b})
$$
for the parameters given by
$p=|a|^{-1}e^{i(\arg ad)/2}$, 
$q=|a|^{-1}\bar{b}e^{i(\arg ad)/2}$, 
$\theta=2^{-1}(\arg (a/d))$. 
Consequently, the inverse of $\mc{M}$ satisfies
\ben
\mc{M}^{-1}(S)=\frac{e^{i(\arg (a/d))/2}}{|a|}
\begin{pmatrix}e^{i(\arg ad)/2}&be^{-i(\arg ad)/2}\\
\bar{b}e^{i(\arg ad)/2}&e^{-i(\arg ad)/2}
\end{pmatrix}
=
\begin{pmatrix}
\bar{a}^{-1}&b/d\\
\bar{b}/\bar{a}&d^{-1}
\end{pmatrix}.
\een
\end{remark}
\begin{remark}
Define the subspaces $\mc{T}_0\subset \mc{T}$ and $\mc{S}_0\subset \mc{S}$ by
\ben
\mc{T}_0=\{T_{p,q,\theta}\in\mc{T};\,\theta=0\},\quad
\mc{S}_0=\{S_{p,q,\theta}\in\mc{S};\,\theta=0\}.
\een
They are enough to consider quantum walks corresponding to the Schr\"odinger operators in the meaning of \cite{Hi}. 
\end{remark}

The following theorem asserts that resonances are generically simple.

\begin{theorem}\label{thm:GenSim}
For each $n_0$, there exists a dense subset $\mathcal{V}\subset \mc{S}([n_0])$ such that for any $\cU\in\mc{V}$, every resonance 
is simple.
\end{theorem}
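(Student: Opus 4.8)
The plan is to reduce the statement to the generic simplicity of the roots of the scalar polynomial already isolated in the proof of Proposition~\ref{prop:BoundAlMul}, and then to apply the identity principle for real-analytic functions on the connected space $\mc{S}([n_0])$. By Proposition~\ref{prop:EquivRes}, $\xi\in\Xi$ is a resonance of $\cU$ exactly when $\T_{22}(\xi)=0$, and the proof of Proposition~\ref{prop:BoundAlMul} provides a polynomial $p_\cU$ of degree $\le n_0$ with $e^{-(n_0+1)i\xi}\,\T_{22}(\xi)=e^{-2i\xi}\,p_\cU(e^{-2i\xi})$. Since the scalar prefactors never vanish and $\zeta\mapsto e^{-2i\zeta}$ maps $\Xi$ locally biholomorphically onto $\C^*$, the nonzero roots of $p_\cU$ are in bijection with the resonance values $e^{-2i\xi}$, the multiplicity of a root $w_0$ coinciding with $m_R(\xi_0)$ whenever $e^{-2i\xi_0}=w_0$. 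In particular, \emph{if $p_\cU$ has $n_0$ pairwise distinct roots then every resonance of $\cU$ is simple}, so it suffices to exhibit a dense set of $\cU\in\mc{S}([n_0])$ for which $p_\cU$ has $n_0$ distinct roots.

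The exceptional set is then shown to be a proper analytic subset. The entries of each $T_n(\xi)$ are rational in the entries of $U_n$ with denominators products of $\bar a_n$ and $d_n$; these do not vanish on $\mc{S}$, since a $2\times2$ unitary matrix with $S_{11}\ne0$ necessarily has $S_{22}\ne0$. Hence the coefficients $\alpha_0(\cU),\dots,\alpha_{n_0}(\cU)$ of $p_\cU$ depend real-analytically on $\cU\in\mc{S}([n_0])\cong\mc{S}^{n_0+1}$; write $\Phi(\cU)=(\alpha_0(\cU),\dots,\alpha_{n_0}(\cU))$. Let $\Delta\colon\C^{n_0+1}\to\C$ be the resultant of $\sum_j\alpha_jw^j$ and its formal derivative: a polynomial in $\alpha$ that is nonzero precisely when $\sum_j\alpha_jw^j$ has degree $n_0$ with $n_0$ distinct roots. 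Then $\mathcal{V}:=\{\cU\in\mc{S}([n_0]);\ \Delta(\Phi(\cU))\ne0\}$ is open, and by the previous paragraph every $\cU\in\mathcal{V}$ has all resonances simple; its complement is the zero set of the real-analytic function $\Delta\circ\Phi$. Because $\mc{S}$ is connected — it is the complement in the connected group $\ope{U}(2)$ of the real-codimension-two submanifold $\{S_{11}=0\}$ — the space $\mc{S}([n_0])$ is connected, so $\{\Delta\circ\Phi=0\}$ is either all of $\mc{S}([n_0])$ or nowhere dense; thus $\mathcal{V}$ is dense as soon as it is nonempty.

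It then remains to produce one element of $\mathcal{V}$. Take $U_0=U_{n_0}$ to be the coin with real entries $a=d=\sqrt{1-s^2}$, $b=-c=s$, for a fixed $s\in(0,1)$, and $U_n=I_2$ otherwise; this lies in $\mc{S}([n_0])$. Since $T_n(\xi)=\operatorname{diag}(e^{i\xi},e^{-i\xi})$ whenever $U_n=I_2$, multiplying out $\T=T_0T_1\cdots T_{n_0}$ gives $\T_{22}=v^2e^{i(n_0-1)\xi}+u^2e^{-i(n_0+1)\xi}$ with $u=(1-s^2)^{-1/2}$, $v=su$, hence $p_\cU(w)=u^2w^{n_0}+v^2$. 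This polynomial has degree $n_0$ and its $n_0$ roots $\{\,w:\ w^{n_0}=-s^2\,\}$ are pairwise distinct, so $\Delta(\Phi(\cU))\ne0$ and $\cU\in\mathcal{V}$; indeed this $\cU$ has exactly $2n_0$ resonances, all simple. Therefore $\mathcal{V}$ is the required open dense subset.

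The only genuine obstacle is this last step: certifying that the exceptional locus is not everything, i.e., exhibiting a quantum walk whose resonances are simple points. This is exactly where the authors' explicit quantization condition and a Rouch\'e argument naturally enter — at the free walk $p_\cU(w)=w^{n_0}$ has an $n_0$-fold root at the origin (so no resonances), and one tracks by Rouch\'e how these $n_0$ roots move off the origin and spread apart under a small perturbation; the computation above realizes this splitting explicitly. The multiplicity bookkeeping in the reduction and the connectedness of $\mc{S}([n_0])$ are routine by comparison.
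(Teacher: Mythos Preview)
Your proof is correct and takes a genuinely different route from the paper's. The paper argues locally: given $\cU$ with a resonance $\xi_0$ of multiplicity $m>1$, it builds an explicit one-parameter family $\cU_{\e,\phi}$ (perturbing only $U_0$ via a specific element of $\mc{S}$), expands $\lambda^{n_0-1}\T^{(\e,\phi)}_{22}$ to leading order in $z=\lambda^2-\lambda_0^2$ and $\e$, and applies the Rouch\'e-type Lemma~\ref{lem:Rouche} to see the $m$-fold zero split into $m$ simple ones; a suitable choice of the phase $\phi$ handles all resonances at once. Your argument is global and algebraic: you recognize the bad locus as the zero set of the real-analytic function $\Delta\circ\Phi$ (resultant of $p_\cU$ and $p_\cU'$), invoke connectedness of $\mc{S}([n_0])$ and the identity principle, and then need only one explicit walk with $2n_0$ simple resonances --- which your two-barrier example $p_\cU(w)=u^2w^{n_0}+v^2$ supplies cleanly. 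Your approach yields openness of $\mathcal{V}$ for free and avoids the perturbative bookkeeping; the paper's approach is more constructive, showing concretely how to deform any given $\cU$ (touching only a single coin) into the simple regime, and sidesteps the need to verify the real-analytic manifold structure and connectedness of $\mc{S}([n_0])$. Your final remark anticipating that Rouch\'e would be used to split the $n_0$-fold root of $w^{n_0}$ at the origin is not quite how the paper deploys it --- there Rouch\'e splits a multiple \emph{resonance} (a nonzero root) rather than the degenerate root at $0$ of the free walk --- but this does not affect your argument.
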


To prove this theorem, we apply the following \cite[Lemma 2.26]{DyZw}.
\begin{lemma}[\cite{DyZw}]\label{lem:Rouche}
Let $\e\mapsto f_\e(z)$ be a family of functions holomorphic in a complex disc $D(0,r_0)=\{z\in\C;\,|z|\le r_0\}$ satisfying 
\ben
f_\e(z)=z^m-\e+\ord(\e^2)+\ord(\e|z|),\quad \left|z\right|\le r_0.
\een
Then for $\e$ sufficiently small, $f_\e(z)$ has exactly $m$ simple zeros in $D(0,r_0)$.
\end{lemma}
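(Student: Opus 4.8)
The plan is to treat $f_\e$ as a perturbation of $z^m-\e$, whose $m$ zeros lie on the circle $|z|=|\e|^{1/m}$ and collapse to the origin as $\e\to0$. First I would fix the decomposition
\ben
f_\e(z)=z^m-\e+g_\e(z),\qquad |g_\e(z)|\le C(|\e|^2+|\e|\,|z|)\quad(|z|\le r_0),
\een
with $g_\e$ holomorphic and $C$ independent of $\e$; this is merely a restatement of the hypothesis, isolating the error $g_\e=\ord(\e^2)+\ord(\e|z|)$.

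The first step is a global count by Rouch\'e's theorem on the boundary $|z|=r_0$. There $|z^m|=r_0^m$ is bounded below, while $|-\e+g_\e(z)|\le|\e|+C(|\e|^2+|\e|r_0)\to0$, so for $\e$ small the term $z^m$ dominates and $f_\e$ has exactly $m$ zeros, counted with multiplicity, in $D(0,r_0)$. This fixes the total but cannot by itself detect simplicity: as $\e\to0$ all these zeros rush into the origin, and the naive limit $f_\e\to z^m$ sees only a single zero of multiplicity $m$.

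The key step is therefore a rescaling that resolves the cluster. I would fix a branch of $\e^{1/m}$, set $z=\e^{1/m}w$, and define
\ben
F_\e(w):=\e^{-1}f_\e(\e^{1/m}w)=w^m-1+\e^{-1}g_\e(\e^{1/m}w).
\een
The point is that the rescaled remainder obeys $|\e^{-1}g_\e(\e^{1/m}w)|\le C|\e|+C|\e|^{1/m}|w|$, which tends to $0$ uniformly on any fixed disc, say $|w|\le2$; here the hypothesis that the error is $\ord(\e^2)+\ord(\e|z|)$, rather than merely $\ord(\e)$, is exactly what is needed. Thus $F_\e\to w^m-1$ uniformly on $|w|\le2$. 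Since $w^m-1$ has $m$ simple zeros, the $m$-th roots of unity, all lying in $|w|<2$, Hurwitz's theorem gives, for $\e$ small, exactly one zero of $F_\e$ (counted with multiplicity) near each root of unity, and a single such zero is necessarily simple. Hence $F_\e$ has $m$ simple zeros in $|w|<2$, and since $z=\e^{1/m}w$ is a linear biholomorphism preserving multiplicities, $f_\e$ has $m$ simple zeros in $|z|<2|\e|^{1/m}$.

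Finally I would combine the two counts. For $\e$ small one has $2|\e|^{1/m}<r_0$, so the $m$ simple zeros just produced lie in $D(0,r_0)$ and already account for $m$ zeros counted with multiplicity; by the Rouch\'e count this is the total, so $f_\e$ has no further zeros in $D(0,r_0)$. Therefore $f_\e$ has exactly $m$ zeros in $D(0,r_0)$, all simple. The main obstacle, and the crux of the argument, is precisely the collapse of the zeros to the origin: it forces the rescaling $z=\e^{1/m}w$, and the whole proof hinges on the quantitative fact that after this rescaling and division by $\e$ the perturbation still vanishes, which is guaranteed by the higher-order form $\ord(\e^2)+\ord(\e|z|)$ of the error.
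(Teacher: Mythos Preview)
The paper does not prove this lemma; it is quoted from \cite[Lemma~2.26]{DyZw} and used as a black box in the proof of Theorem~\ref{thm:GenSim}. Your argument is correct and is essentially the proof given in \cite{DyZw}: rescale $z=\e^{1/m}w$ to resolve the cluster of zeros collapsing to the origin, observe that the rescaled function $F_\e(w)=w^m-1+o(1)$ converges uniformly on $|w|\le2$ thanks to the $\ord(\e^2)+\ord(\e|z|)$ form of the remainder, then apply Hurwitz (equivalently, Rouch\'e on small circles about the $m$-th roots of unity) to obtain $m$ simple zeros, and match this against the global Rouch\'e count on $|z|=r_0$.
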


\begin{proof}[proof of Theorem~\refup{thm:GenSim}]
We denote by $\lambda(\xi)=e^{-i\xi}$. 
Let $\cU\in\mc{S}([n_0])$ have resonances $\xi_0,\xi_0+\pi\in\Xi$ whose algebraic multiplicity is $m>1$. 
Then there exist a polynomial $\sigma$ of degree $n_0$ and $\til{\sigma},$ $\til{\tau}$ of degree $n_0-1$ such that 
\ben
\lambda^{n_0+1}\T_{22}=\lambda^2 \sigma(\lambda^2)=
\begin{pmatrix}
\lambda\bar{b}_0/\bar{a}_0&\lambda^2/d_0
\end{pmatrix}
\begin{pmatrix}
\lambda\til{\tau}(\lambda^2)\\
\lambda^2\til{\sigma}(\lambda^2)
\end{pmatrix},
\een
and $\lambda_0^2:=\lambda(\xi_0)^2=e^{-2i\xi_0}$ is a zero of $\sigma$ with its multiplicity $m$. 
We fix $\phi\in[-\pi,\pi)$ and consider a family of perturbed operators $(\cU_{\e,\phi}:=\cU((\til{U}_n(\e,\phi))_{n\in\Z}))_{\e>0}\in\mc{S}([n_0])$ given by $\til{U}_n=U_n$ for $n\in\Z\setminus\{0\}$ and 
\begin{align*}
&
\til{U}_0(\e,\phi)
:=S_{p,q,0}
*U_0\in\mc{S},\quad
p(\e,\phi)=\frac{1+e^{i\phi}\e}{\sqrt{1+2\e\cos\phi}},\quad
q(\e,\phi)=\frac{\e e^{i\phi}(1+e^{i\phi}\e)}{\sqrt{1+2\e\cos\phi}}.
\end{align*}
Then we have $(p(\e,\phi),q(\e,\phi))\to(1,0)$, that is, $S_{p,q,0}\to I_2$ as $\e\to0_+$, and 
\ben
\lambda^{n_0+1}\T^{(\e,\phi)}_{22}
=k_{\e,\phi}\lambda^2
\left(
\sigma(\lambda^2)+\e(\bar{a}_0^{-1}\alpha_\phi\til{\tau}(\lambda^2)+\lambda^2d_0^{-1}\bar{\alpha}_\phi\til{\sigma}(\lambda^2))
\right),
\een
where we set $\alpha_\phi=e^{i\phi}+\bar{b}_0e^{-i\phi}$, 
$k_{\e,\phi}=(1+2\e\cos\phi)^{-1/2}$. 
By the Taylor expansion at $\lambda(\xi_0)^2$, we have
\ben
\frac{\lambda^{n_0-1}}{k_{\e,\phi}}\T^{(\e,\phi)}_{22}
=z^m+\e(\bar{a}_0^{-1}\alpha_\phi\til{\tau}(\lambda_0^2)+\lambda_0^2d_0^{-1}\bar{\alpha}_\phi\til{\sigma}(\lambda_0^2))
+\ord(z^{m+1})
+\ord(\e z)
\een
with $z=\lambda^2-\lambda_0^2$. Since the determinant of each $T_n(\xi)$ does not vanish, $\til{\sigma}$, $\til{\tau}$ does not vanish at the same time, and we fix $\phi$ so that $\bar{a}_0^{-1}\alpha_\phi\til{\tau}(\lambda_0^2)+\lambda_0^2d_0^{-1}\bar{\alpha}_\phi\til{\sigma}(\lambda_0^2)$ does not vanish. 
The implicit function theorem shows that Lemma~\ref{lem:Rouche} is applicable to the family $f_\e(z)=\frac{\lambda^{n_0-1}}{k_{\e,\phi}}\T^{(\e,\phi)}_{22}$, and the simplicity of the resonances of $\cU_{\e,\phi}$ near $\xi_0$ for small $\e>0$ follows. 
Note that the choice of $\phi\in[-\pi,\pi)$ only depends on $\xi_0$. 
Since the number of the resonances is at most $2n_0$, we can take $\phi\in[-\pi,\pi)$ such that every resonance for $\cU_{\e,\phi}$ is simple for small $\e>0$. The fact that $\cU_{\e,\phi}\to\cU$ as $\e\to0_+$ implies the density. 
\end{proof}

\section*{Acknowledgements}
H.M.was supported by the Grant-in-Aid for Young Scientists Japan Society for the Promotion of Science (Grant No.~20K14327). 
E.S. acknowledges financial supports from the Grant-in-Aid of 
Scientific Research (C) Japan Society for the Promotion of Science (Grant No.~19K03616) 
and Research Origin for Dressed Photon.

\appendix
\section{Meromorphic continuation of the resolvent}\label{A1}
We give another standard way to introduce resonances. 
We first compute an explicit expression of the resolvent for $\xi\in\C_+=\{z\in\C;\,\im z>0\}$, and continue it meromorphically to the lower half plane. Then resonances are the poles of this operator. 
In addition, the expression clearly shows that $\xi$ is a resonance if and only if $e^{-i\xi}$ is an eigenvalue of $\cK$. 
It means that this definition is equivalent to the one in Definition~\refup{Def:Res} (see Proposition~\refup{prop:EquivRes}).

\begin{proposition}
Assume \bu{(A1--2)}. For $f\in\cH$ and $\xi\in\C_+$, we define an operator $R(\xi)$ by
\begin{align*}
R(\xi)f(n):=
e^{-in\xi}P_0v(0)+\sum_{k=0}^{-n-1}e^{i(k+1)\xi}\bra{L}f(n+k)\ket{L}
+\sum_{k=0}^\infty e^{i(k+1)\xi}\bra{R}f(n-k)\ket{R},
\end{align*}
for $n<0$,
\begin{align*}
R(\xi)f(n):=v(n),
\end{align*}
for $n\in[n_0]$, and
\begin{align*}
R(\xi)f(n):=
\sum_{k=0}^\infty e^{i(k+1)\xi}\bra{L}f(n+k)\ket{L}
+e^{i(n-n_0)\xi}Q_{n_0}v(n_0)+\sum_{k=0}^{n-n_0-1}e^{i(k+1)\xi}\bra{R}f(n-k)\ket{R},
\end{align*}
for $n>n_0$, 
where we have set $v,\til{f}:[n_0]\to\C^2$ by 
\begin{align*}
&v=(e^{-i\xi}I-\cK)^{-1}\til{f},\\
&\til{f}(n)=f(n)+\dl_0(n)\sum_{k=0}^\infty e^{i(k+1)\xi}\bra{R}f(-k)\ket{R}
+\dl_{n_0}(n)\sum_{k=0}^\infty e^{i(k+1)\xi}\bra{L}f(n_0+k)\ket{L}.
\end{align*}
Then this is the $l^2$-resolvent, that is, we have $(e^{-i\xi}I-\cU)R(\xi)f=f$ and $R(\xi)f\in\cH$. 
Moreover, this resolvent operator $R(\xi)$ maps $\cH\cap\cH^\ope{out}_N$ to itself $(\forall N\ge0)$. 
\end{proposition}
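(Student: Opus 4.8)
The plan is to verify the two defining properties of the $l^2$-resolvent directly from the explicit formula, and then to read the invariance of $\cH\cap\cH^\ope{out}_N$ off the same formula. For $\xi\in\C_+$ one has $|e^{-i\xi}|=e^{\im\xi}>1$, while $\|\cK\|\le1$ by \eqref{eq:NormDecreasing}; hence $e^{-i\xi}$ lies outside the closed unit disc, which contains $\sigma(\cK)$, so $(e^{-i\xi}I-\cK)^{-1}$ exists, and $e^{-i\xi}$ also avoids the unit circle $=\sigma(\cU)$, so $(e^{-i\xi}I-\cU)^{-1}$ exists as a bounded operator on $\cH$. Moreover $|e^{i\xi}|=e^{-\im\xi}<1$ and $f\in\cH\subset l^\infty(\Z;\C^2)$, so all geometric series occurring in $\til f$, in $v=(e^{-i\xi}I-\cK)^{-1}\til f$ and in $R(\xi)f(n)$ converge absolutely; thus $R(\xi)f$ is a well-defined $\C^2$-valued sequence.

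Next I would check $(e^{-i\xi}I-\cU)R(\xi)f=f$ component by component, splitting $\Z$ into three zones. On the interior sites $1\le n\le n_0-1$ one has $(\cU\psi)(n)=(\cK\psi)(n)$ and $\til f(n)=f(n)$, so with $\psi=R(\xi)f$ and $\psi|_{[n_0]}=v$ the identity is exactly the $n$-th component of $(e^{-i\xi}I-\cK)v=\til f$. On the free zones $n\le-1$ and $n\ge n_0+1$ the coins are trivial and $\cU$ splits into a left-moving channel ($\bra{L}$-part, shift $+1$) and a right-moving channel ($\bra{R}$-part, shift $-1$); there the identity is a telescoping of the geometric sums, the boundary terms $e^{-in\xi}P_0v(0)$ and $e^{i(n-n_0)\xi}Q_{n_0}v(n_0)$ propagating the outgoing amplitudes that leave the edges of $[n_0]$ (this covers also the sites $n=-1$ and $n=n_0+1$, where $P_0$ resp. $Q_{n_0}$ enters). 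The delicate case is the pair of junction sites $n=0$, $n=n_0$: since $U_{-1}=U_{n_0+1}=I_2$, the term $Q_{-1}\psi(-1)$ in $(\cU\psi)(0)$ and the term $P_{n_0+1}\psi(n_0+1)$ in $(\cU\psi)(n_0)$ are precisely the incoming contributions that have been absorbed into $\til f(0)$ and $\til f(n_0)$ by construction, so after inserting the formula for $\psi(-1)$, $\psi(n_0+1)$ the identity once more reduces to $(e^{-i\xi}I-\cK)v=\til f$. Matching the powers of $e^{i\xi}$ and the upper limits of the truncated sums here is the one step requiring care, and is where the choice of $\til f$ does its work. For $R(\xi)f\in\cH$: the $[n_0]$-part is finite-dimensional; the two boundary terms decay geometrically as the site index tends to $\pm\infty$; and each series term is a discrete convolution of the $l^2$ sequence $\bra{L}f(\cdot)$ or $\bra{R}f(\cdot)$ with the $l^1$ kernel $(e^{i(k+1)\xi})_{k\ge0}$ (or a truncation of it), hence $l^2$ by Young's inequality. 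Combined with the identity and the injectivity of $e^{-i\xi}I-\cU$ on $\cH$, this gives $R(\xi)=(e^{-i\xi}I-\cU)^{-1}$.

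Finally, for the invariance of $\cH\cap\cH^\ope{out}_N$, let $f$ satisfy $\bra{R}f(-n)=\bra{L}f(n_0+n)=0$ for $n\ge N$. For $n\ge\max(N,1)$ the term $e^{in\xi}P_0v(0)$ and the first truncated sum in $R(\xi)f(-n)$ are multiples of $\ket{L}$, so $\bra{R}R(\xi)f(-n)=\sum_{k\ge0}e^{i(k+1)\xi}\bra{R}f(-n-k)$, and every summand vanishes since $n+k\ge N$; symmetrically $\bra{L}R(\xi)f(n_0+n)=0$ using $Q_{n_0}v(n_0)\in\C\ket{R}$ and $\bra{L}f(n_0+n+k)=0$. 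When $N=0$ one also needs $\bra{R}v(0)=\bra{L}v(n_0)=0$, which follows from $e^{-i\xi}v=\til f+\cK v$ because the $\bra{R}$-component of $\til f(0)$ and of $(\cK v)(0)=P_1v(1)$ both vanish, and likewise at $n_0$ with $(\cK v)(n_0)=Q_{n_0-1}v(n_0-1)$. Together with $R(\xi)f\in\cH$ this yields $R(\xi)f\in\cH\cap\cH^\ope{out}_N$. The only genuine obstacle in the whole argument is the junction-site bookkeeping in the second paragraph; the rest is finite-dimensional linear algebra and a convolution estimate.
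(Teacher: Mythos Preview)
Your approach is correct and coincides with the paper's, which in fact gives no detailed proof at all: after the statement the paper only remarks that the $l^2$-boundedness follows from $\|g*h\|_{l^2}\le\|g\|_{l^2}\|h\|_{l^1}$ together with $(e^{in\xi})_{n\ge0}\in l^1$, exactly your Young-inequality step, and leaves the algebraic verification of $(e^{-i\xi}I-\cU)R(\xi)f=f$ and the invariance of $\cH\cap\cH^{\ope{out}}_N$ implicit. Your three-zone check (interior, free region, junction sites $n=0,n_0$), the observation that $\bra{R}(\cK v)(0)=\bra{L}(\cK v)(n_0)=0$ forces $\bra{R}v(0)=\bra{L}v(n_0)=0$ in the $N=0$ case, and the use of $\|\cK\|\le1$ to guarantee invertibility of $e^{-i\xi}I-\cK$ are all the natural steps and supply the details the paper omits.
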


Here, the $l^2$-boundedness of $R(\xi)$ is essentially shown by the facts that $\|g*h\|_{l^2}\le\|g\|_{l^2}\|h\|_{l^1}$ and $(e^{in\xi})_{n\ge0}\in l^1$.

\begin{corollary}
The operator $R(\xi)$ can be extended meromorphically for $\xi\in\C$ as an operator $\cH^\ope{out}\to\cH^\ope{out}$. 
Its poles coincide with the ones of $(e^{-i\xi} I-\cK)^{-1}$. In other words, $\xi$ is a pole of $R(\xi)$ if and only if $e^{-i\xi}$  is an eigenvalue of $\cK$. 
\end{corollary}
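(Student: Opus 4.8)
The plan is to read the explicit formula for $R(\xi)$ in the Proposition above verbatim, but now with $f$ ranging over $\cH^\ope{out}$ instead of $\cH$. The decisive observation is that the outgoing hypothesis collapses every genuinely infinite sum in that formula: if $f\in\cH^\ope{out}_N$ then $\bra{R}f(-n)=\bra{L}f(n_0+n)=0$ for $n\ge N$, so the tail sums $\sum_{k\ge0}e^{i(k+1)\xi}\bra{R}f(n-k)\ket{R}$ and $\sum_{k\ge0}e^{i(k+1)\xi}\bra{L}f(n+k)\ket{L}$, as well as the two correction sums defining $\til{f}$, each have only finitely many nonzero terms and are therefore entire in $\xi$ (Laurent polynomials in $e^{\pm i\xi}$). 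Hence the only non-entire ingredient of $R(\xi)f$ is the factor $(e^{-i\xi}I-\cK)^{-1}$ inside $v$; its entries are cofactors of $e^{-i\xi}I-\cK$ over $\det(e^{-i\xi}I-\cK)$, both Laurent polynomials in $e^{-i\xi}$, so $(e^{-i\xi}I-\cK)^{-1}$ is meromorphic on $\C$ with poles exactly at the $\xi$ for which $e^{-i\xi}$ is an eigenvalue of $\cK$ (recall $e^{-i\xi}\ne0$ always, so the eigenvalue $0$ of $\cK$ is irrelevant). This already delivers the meromorphic extension of $R(\cdot)f$ to all of $\C$ and the inclusion $\{\text{poles of }R\}\subseteq\{\xi:e^{-i\xi}\in\operatorname{spec}\cK\}$.

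Next I would check the target space and the resolvent equation. For the target: in $R(\xi)f(n)$ with $n<0$ the only term carrying a nonzero $\bra{R}$-component is the tail sum $\sum_{k\ge0}e^{i(k+1)\xi}\bra{R}f(n-k)\ket{R}$, which vanishes once $n\le-N$ when $f\in\cH^\ope{out}_N$; symmetrically $\bra{L}R(\xi)f(n_0+n)=0$ for $n\ge N$. Thus $R(\xi)$ maps each $\cH^\ope{out}_N$ into itself, in particular $\cH^\ope{out}\to\cH^\ope{out}$; and it is $\cH_w$-valued because, $\xi$ fixed, $R(\xi)f(n)=\ord(e^{c|n|})$ (from the $e^{-in\xi}P_0v(0)$ and $e^{i(n-n_0)\xi}Q_{n_0}v(n_0)$ terms), which the weight $(|n|!)^{-1}$ absorbs. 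For the identity $(e^{-i\xi}I-\cU)R(\xi)f=f$: it is the same termwise algebraic verification that proves the Proposition, now valid for all $f\in\cH^\ope{out}$ and all non-pole $\xi$ since the sums are finite; equivalently, both sides are meromorphic in $\xi$ and agree on $\C_+$ for $f\in\cH\cap\cH^\ope{out}$, so they agree by the identity theorem, which also shows the extension restricts to the $l^2$-resolvent on $\C_+$.

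Finally I would show that a pole of $(e^{-i\xi}I-\cK)^{-1}$ is never cancelled. Fix $\xi_0$ with $\lambda_0:=e^{-i\xi_0}\in\operatorname{spec}\cK$, so $\lambda_0\ne0$, and pick an eigenvector $\bm{v}$, $\cK\bm{v}=\lambda_0\bm{v}$. Since $(\cK\bm{v})(0)=P_1\bm{v}(1)\in\C\ket{L}$ and $(\cK\bm{v})(n_0)=Q_{n_0-1}\bm{v}(n_0-1)\in\C\ket{R}$, the relation $\cK\bm{v}=\lambda_0\bm{v}$ with $\lambda_0\ne0$ forces $\bra{R}\bm{v}(0)=\bra{L}\bm{v}(n_0)=0$ (as in the proof of Proposition~\ref{prop:genRS}). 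Take $f\in\cH$ supported on $[n_0]$ with $f|_{[n_0]}=\bm{v}$; then $f\in\cH^\ope{out}_1$, the two correction sums in $\til{f}$ reduce to $e^{i\xi}\bra{R}\bm{v}(0)\ket{R}=0$ and $e^{i\xi}\bra{L}\bm{v}(n_0)\ket{L}=0$, so $\til{f}=\bm{v}$ and $v=(e^{-i\xi}I-\cK)^{-1}\bm{v}=(e^{-i\xi}-\lambda_0)^{-1}\bm{v}$. Because $R(\xi)f(n)=v(n)$ on $[n_0]$, $R(\xi)f$ has a simple pole at $\xi_0$. Hence the pole sets coincide, and together with Proposition~\ref{prop:EquivRes} this identifies the present notion of resonance with Definition~\ref{Def:Res}. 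The one place I would be careful is the honest bookkeeping that the termwise computation behind the Proposition goes through unchanged for non-$l^2$ outgoing $f$ and complex $\xi$: there is no analytic difficulty once the sums are finite, but the index ranges must be tracked precisely. I do not expect a real obstacle beyond that.
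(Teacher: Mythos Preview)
Your proposal is correct and follows exactly the line the paper intends: the paper states the Corollary without proof, treating it as an immediate consequence of the explicit formula in the preceding Proposition, and you have simply written out the details. In particular, your non-cancellation argument (choosing $f$ to be an eigenvector of $\cK$ extended by zero, so that $\til f=\bm v$ and $R(\xi)f|_{[n_0]}=(e^{-i\xi}-\lambda_0)^{-1}\bm v$) is a clean way to verify the reverse inclusion of pole sets, which the paper leaves implicit.
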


Kenta Higuchi, 
Department of Mathematical Sciences, 
Ritsumeikan University/ 
1-1-1 Noji-Higashi, Kusatsu, 
525-8577,  Japan

\textit{E-mail address}: ra0039vv@ed.ritsumei.ac.jp

Hisashi Morioka, 
Graduate School of Science and Engineering, 
Ehime University/ 
Bunkyo-cho 3, Matsuyama, Ehime, 
790-8577, Japan

\textit{E-mail address}: morioka@cs.ehime-u.ac.jp

Etsuo Segawa, 
Graduate School of Environment Information Sciences, 
Yokohama National University/ 
Hodogaya, Yokohama, 
240-8501, Japan

\textit{E-mail address}: segawa-etsuo-tb@ynu.ac.jp

\end{document}